\newcommand{\iprod}[1]{\langle#1\rangle}
\newcommand{\jump}[1]{\llbracket#1\rrbracket}
\newcommand{\ud}{\mathrm{d}}
\newcommand{\bs}[1]{\boldsymbol{#1}}
\newcommand{\fine}{\mathrm{fine}}
\newcommand{\UR}{U^\mathrm{R}}
\newcommand{\bUR}{\bs{U}^\mathrm{R}}
\theoremstyle{thmstyleone}
\newtheorem{theorem}{Theorem}[section]
\newtheorem{lemma}[theorem]{Lemma}
\newtheorem{corollary}[theorem]{Corollary}
\theoremstyle{thmstyletwo}
\newtheorem{example}[theorem]{Example}
\newtheorem{remark}[theorem]{Remark}
\begin{document}
%%%%%%%%%%%%%%%%%%%%%%%%%%%%%%%%%%%%%%%%%%%%%%%%%%%%%%%%%%%%%%%%%%%%%%%%%%%%%%%
\title[Discontinuous Galerkin Time Stepping]%
{Error Profile for Discontinuous Galerkin Time Stepping of Parabolic PDEs}
\author*[1]{\fnm{William} \sur{McLean}}\email{w.mclean@unsw.edu.au}
\author[1]{\fnm{Kassem} \sur{Mustapha}}\email{kassem.ahmad.mustapha@gmail.com}
\affil*[1]{\orgdiv{School of Mathematics and Statistics},
\orgname{University of New South Wales},
\orgaddress{\city{Kensington}, \postcode{2052}, \state{NSW},
\country{Australia}}}

\abstract{We consider the time discretization of a linear parabolic problem by
the discontinuous Galerkin (DG) method using piecewise polynomials of degree at
most~$r-1$ in~$t$, for $r\ge1$ and with maximum step size~$k$. It is well known
that the spatial $L_2$-norm of the DG error is of optimal order~$k^r$ globally
in time, and is, for~$r\ge2$, superconvergent of order~$k^{2r-1}$ at the nodes.
We show that on the $n$th subinterval~$(t_{n-1},t_n)$, the dominant term in the
DG error is proportional to the local right Radau polynomial of degree~$r$.
This error profile implies that the DG error is of order~$k^{r+1}$ at the
right-hand Gauss--Radau quadrature points in each interval.  We show that the
norm of the jump in the DG solution at the left end point~$t_{n-1}$ provides an
accurate \emph{a posteriori} estimate for the maximum error over the
subinterval~$(t_{n-1},t_n)$.  Furthermore, a simple post-processing step yields
a \emph{continuous} piecewise polynomial of degree~$r$ with the optimal global
convergence rate of order~$k^{r+1}$. We illustrate these results with some
numerical experiments.}

\keywords{Superconvergence, Post-processing, Gauss--Radau quadrature, Legendre
polynomials}
\pacs[MSC Classification]{%
65J08, % Numerical solutions to abstract evolution equations
65M15} % Error bounds for initial value and initial-boundary value problems
\maketitle
%%%%%%%%%%%%%%%%%%%%%%%%%%%%%%%%%%%%%%%%%%%%%%%%%%%%%%%%%%%%%%%%%%%%%%%%%%%%%%%
\section{Introduction}

Consider an abstract, linear initial-value problem
\begin{equation}\label{eq: IVP}
u'(t)+Au(t)=f(t)\quad\text{for $0<t\le T$,}\quad\text{with $u(0)=u_0$.}
\end{equation}
We assume a continuous solution~$u:[0,T]\to\mathbb{L}$, with
$u(t)\in\mathbb{H}$ if~$t>0$, for two Hilbert spaces $\mathbb{L}$~and
$\mathbb{H}$ with a compact and dense imbedding $\mathbb{H}\subseteq\mathbb{L}$.
By using the inner product~$\iprod{\cdot,\cdot}$ in~$\mathbb{L}$ to identify
this space with its dual~$\mathbb{L}^*$, we obtain an
imbedding $\mathbb{L}\subseteq\mathbb{H}^*$.  The linear
operator~$A:\mathbb{H}\to\mathbb{H}^*$ is assumed to be bounded and
self-adjoint, as well as strictly positive-definite.  For instance, if
$A=-\nabla^2$ so that \eqref{eq: IVP} is the classical heat equation on a
bounded Lipschitz domain~$\Omega\subset\mathbb{R}^d$ where~$d\ge1$, and if
we impose homogeneous Dirichlet boundary conditions, then in the usual way we
can choose $\mathbb{L}=L_2(\Omega)$~and $\mathbb{H}=H^1_0(\Omega)$, in which
case $\mathbb{H}^*=H^{-1}(\Omega)$.

For an integer~$r\ge1$, let $U$ denote the discontinuous Galerkin (DG)
time-stepping solution to~\eqref{eq: IVP} using piecewise-polynomials of degree
at most~$r-1$ with coefficients in~$\mathbb{H}$.  Thus, we consider only the
time discretization with no additional error arising from a spatial
discretization. \cref{sec: DG} summarizes known results on the convergence
properties of the DG solution~$U$, and \cref{sec: Legendre} introduces a local
Legendre polynomial basis that is convenient for the practical implementation
of DG time stepping as well as for our theoretical study.  These sections serve
as preparation for \cref{sec: Error behaviour} where we show that
\begin{equation}\label{eq: error}
U(t)-u(t)=-a_{nr}(u)\bigl[p_{nr}(t)-p_{n,r-1}(t)\bigr]+O(k_n^{r+1})
    \quad\text{for $t\in I_n$.}
\end{equation}
Here, $k_n$ denotes the length of the $n$th time interval~$I_n=(t_{n-1},t_n)$,
the function~$p_{nr}$ denotes the Legendre polynomial of degree~$r$, shifted
to~$I_n$, and $a_{nr}(u)$ denotes the coefficient of~$p_{nr}$ in the local
Legendre expansion of~$u$ on~$I_n$.  Since $a_{nr}(u)=O(k_n^r)$, the
result~\eqref{eq: error} shows that the dominant term in the DG error is
proportional to the Gauss--Radau polynomial~$p_{nr}(t)-p_{n,r-1}(t)$
for~$t\in I_n$.  However, the coefficient~$a_{nr}(u)$ and the $O(k_n^{r+1})$
term in~\eqref{eq: error} typically grow as~$t\to0$ at rates depending on the
regularity of the solution~$u$, which in turn depends on the regularity and
compatibility of the data. A possible extension permitting a time-dependent
operator~$A(t)$ is discussed briefly in~\cref{remark: A(t)}.

In 1985, Eriksson, Johnson and Thom\'ee~\cite{ErikssonEtAl1985} presented an
error analysis for DG time stepping of~\eqref{eq: IVP}, showing optimal
$O(k^{r+1})$~convergence in~$L_\infty\bigl((0,T);L_2(\Omega)\bigr)$ and
$O(k^{2r-1})$~superconvergence for the nodal
values~$\lim_{t\to t_n^-}U(t)$, where $k=\max_{1\le n\le N}k_n$.  Subsequently,
numerous authors~\cite{SchoetzauSchwab2000,ChrysafinosWalkington2006,
MakridakisNochetto2006,AkrivisEtAl2011,RichterEtAl2013,LeykekhmanVexler2017,
Saito2021} have refined these results, including a recent $L_\infty$~stability
result of Schmutz~and Wihler~\cite{SchmutzWihler2019} that we use in the proof
of \cref{thm: U - Pi tilde u}.  Shortly before completing the present work we
learned that the expansion~\eqref{eq: error} was proved by Adjerid et
al.~\cite{AdjeridEtAl2002,AdjeridBaccouch2010} for a linear, scalar hyperbolic
problem, and also for nonlinear systems of ODEs~\cite{Baccouch2016}; see
\cref{remark: Adjerid et al} for more details.

\cref{sec: consequences} discusses some practical consequences
of~\eqref{eq: error}, in particular the superconvergence of the DG solution at
the right Radau points in each interval. This phenomenon was exploited by
Springer and Vexler~\cite{SpringerVexler2014} in the piecewise-linear ($r=2$)
case to achieve higher-order accuracy for a parabolic optimal control problem.
We will see in \cref{lem: jump error} how the norm of the jump in~$U$ at the
break point~$t_{n-1}$ provides an accurate estimate of the maximum DG error over
the interval~$I_n$.  Moreover, a simple, low-cost post-processing step yields a
\emph{continuous} piecewise polynomial~$U_*$ of degree at most~$r$, called the
\emph{reconstruction} of~$U$, that satisfies $U_*(t)-u(t)=O(k_n^{r+1})$
for~$t\in I_n$; see \cref{cor: U* error}. Finally, \cref{sec: experiments}
reports the results of some numerical experiments for a scalar ODE and for heat
equations in one and two spatial dimensions, confirming the convergence
behaviour from the theory based on~\eqref{eq: error}.

Our motivation for the present study originated in a previous
work~\cite{McLean2020} dealing with the implementation of DG time stepping for
a subdiffusion equation
$u'(t)+\partial_t^{1-\nu}Au(t)=f(t)$ with $0<\nu<1$, where
$\partial_t^{1-\nu}$ denotes the Riemann--Liouville fractional time
derivative of order~$1-\nu$.  We observed in numerical experiments that
\eqref{eq: error} holds except with $O(k_n^{r+\nu})$ in place
of~$O(k_n^{r+1})$.
%In a different setting, Deng et al.~\cite{DengEtAl2019} found superconvergence
%points for spectral interpolation of Riesz fractional derivatives.

Treatment of the spatial discretization of~\eqref{eq: IVP} is beyond the scope
of this paper, apart from its use in our numerical experiments.  To make
practical use of our result~\eqref{eq: error} it is necessary to ensure
that the spatial error is dominated by the $O(k_n^{r+1})$~term.  Also, although
we allow nonuniform time steps in our analysis, we will not consider questions
such as local mesh refinement or adaptive step size control, which are
generally required to resolve the solution accurately for~$t$ near~$0$.
%%%%%%%%%%%%%%%%%%%%%%%%%%%%%%%%%%%%%%%%%%%%%%%%%%%%%%%%%%%%%%%%%%%%%%%%%%%%%%%
\section{Discontinuous Galerkin time stepping}\label{sec: DG}

As background and preparation for our results, we formulate in this section the
DG time stepping procedure and summarize key convergence results from the
literature.  Our standard reference is the monograph of
Thom\'ee~\cite[Chapter~12]{Thomee2006}.

Choosing time levels $0=t_0<t_1<t_2<\cdots<t_N=T$, we put
\[
k=\max_{1\le n\le N}k_n\quad\text{where}\quad k_n=t_n-t_{n-1}.
\]
Let $\mathbb{P}_j(\mathbb{V})$ denote the space of
polynomials of degree at most~$j$ with coefficients from a vector
space~$\mathbb{V}$.  We fix an integer~$r\ge1$, put
$\boldsymbol{t}=(t_n)_{n=0}^N$ and form the piecewise-polynomial
space~$\mathcal{X}_r=\mathcal{X}_r(\boldsymbol{t},\mathbb{H})$ defined by
\[
X\in\mathcal{X}_r\qquad\text{iff}\qquad
\text{$X\vert_{I_n}\in\mathbb{P}_{r-1}(\mathbb{H})$ for $1\le n \le N$.}
\]
Denoting the one-sided limits of~$X$ at~$t_n$ by
\[
X^n_+=\lim_{t\to t_n^+}X(t)\quad\text{and}\quad
X^n_-=\lim_{t\to t_n^-}X(t),
\]
we discretize \eqref{eq: IVP} in time by seeking $U\in\mathcal{X}_r$ satisfying
\cite[p.~204]{Thomee2006}
\begin{equation}\label{eq: DG}
\iprod{U^{n-1}_+,X^{n-1}_+}+\int_{I_n}\iprod{U'+AU,X}\,\ud t
    =\iprod{U^{n-1}_-,X^{n-1}_+}+\int_{I_n}\iprod{f,X}\,\ud t
\end{equation}
for $X\in\mathcal{X}_r$ and $1\le n\le N$, with $U^0_-=u_0$.
\cref{sec: Legendre} describes how, given $U^{n-1}_-$~and $f$, we can solve a
linear system to obtain~$U\vert_{I_n}$ and so advance the solution by one time
step.

\begin{remark}
If the integral on the right-hand side of~\eqref{eq: DG} is evaluated using the
right-hand, $r$-point, Gauss--Radau quadrature rule on~$I_n$, then the sequence
of nodal values~$U^n_-$ coincides with the finite difference solution produced
by the $r$-stage Radau IIA (fully) implicit Runge--Kutta method;
see Vlas\'ak and Roskovec~\cite[Section~3]{VlasakRoskovec2015}.
\end{remark}

Let $\|\cdot\|$ denote the norm in~$\mathbb{L}$ and let $u^{(\ell)}$ denote the
$\ell$th derivative of~$u$ with respect to~$t$. It will be convenient to write
\[
\|v\|_{I_n}=\sup_{t\in I_n}\|v(t)\|,
\]
and to define the fractional powers of~$A$ in the usual way via its
spectral decomposition~\cite[Chapter~3]{Thomee2006}. The DG time stepping
scheme has the nodal error bound~\cite[Theorem~12.1]{Thomee2006}
\begin{equation}\label{eq: nodal convergence}
\|U^n_--u(t_n)\|^2\le
C\sum_{j=1}^nk_j^{2\ell}\int_{I_j}\|A^{1/2}u^{(\ell)}(t)\|^2\,\ud t
\quad\text{for $1\le\ell\le r$,}
\end{equation}
and the uniform bound~\cite[Theorem~12.2]{Thomee2006}
\[
\|U-u\|_{I_n}\le\|U^n_--u(t_n)\|+C\|U^{n-1}_--u(t_{n-1})\|
    +Ck_n^\ell\|u^{(\ell)}\|_{I_n}\quad\text{for $1\le\ell\le r$,}
\]
where in both cases $1\le n\le N$. We therefore have optimal convergence
\begin{equation}\label{eq: U optimal}
\|U(t)-u(t)\|=O(k^r)\quad\text{for $0\le t\le T$,}
\end{equation}
provided $u^{(r)}\in L_\infty((0,T);\mathbb{L})$
and $A^{1/2}u^{(r)}\in L_2((0,T);\mathbb{L})$. In fact, $U$ is superconvergent
at the nodes~\cite[Theorem~12.3]{Thomee2006} when~$r\ge2$, with
\[%\label{eq: nodal error ell}
\|U^n_--u(t_n)\|^2\le Ck^{2(\ell-1)}\sum_{j=1}^nk_j^{2\ell}\int_{I_j}
    \|A^{\ell-1/2}u^{(\ell)}(t)\|^2\,\ud t\quad\text{for $1\le\ell\le r$.}
\]
Thus,
\begin{equation}\label{eq: nodal superconvergence}
\|U^n_--u(t_n)\|=O(k^{2r-1}),
\end{equation}
provided $A^{r-1/2}u^{(r)}\in L_2((0,T);\mathbb{L})$.

Suppose for the remainder of this section that $f\equiv0$, and consider error
bounds involving the (known) initial data~$u_0$ instead of the (unknown)
solution~$u$. By separating variables, one finds
that~\cite[Lemma~3.2]{Thomee2006}
\begin{equation}\label{eq: regularity f=0}
\|A^qu^{(\ell)}(t)\|\le Ct^{s-(q+\ell)}\|A^su_0\|
\quad\text{for $0\le s\le q+\ell$ and $0<t\le T$,}
\end{equation}
assuming that $u_0$ belongs to the domain of~$A^s$.  It follows that, for
sufficiently regular initial data, we have the basic error
bound~\cite[Theorem~1]{ErikssonEtAl1985},
\begin{equation}
\|U(t)-u(t)\|\le Ck^\ell\|A^\ell u_0\|
\quad\text{for $0\le t\le T$ and $0\le\ell\le r$.}
\end{equation}
For non-smooth initial data $u_0\in L_2(\Omega)$, the full rate of convergence
still holds but with a constant that blows up as~$t$ tends to
zero~\cite[Theorem~3]{ErikssonEtAl1985}: provided $k_n\le Ck_{n-1}$ for
all~$n\ge2$,
\[
\|U(t)-u(t)\|\le Ct^{-r}k^r\|u_0\|\quad\text{for $0<t\le T$,}
\]
and hence, by interpolation,
\begin{equation}\label{eq: U error s}
\|U(t)-u(t)\|\le Ct^{s-r}k^r\|A^su_0\|\quad
\text{for $0<t\le T$ and $0\le s\le r$.}
\end{equation}
At the nodes~\cite[Theorem~2]{ErikssonEtAl1985},
\begin{equation}\label{eq: nodal error smooth}
\|U^n_--u(t_n)\|\le Ck^s\|A^s u_0\|
\quad\text{for $1\le n\le N$ and $1\le s\le2r-1$,}
\end{equation}
and \cite[Theorem~3]{ErikssonEtAl1985}, provided $k_n\le Ck_{n-1}$ for
all~$n\ge2$,
\begin{equation}\label{eq: nodal error L2}
\|U^n_--u(t_n)\|\le Ct_n^{-s}k^s\|u_0\|
\quad\text{for $1\le n\le N$ and $0\le s\le2r-1$.}
\end{equation}
Taking $s=q$ in~\eqref{eq: nodal error smooth}
and~\eqref{eq: nodal error L2}, we see by interpolation that
\begin{equation}\label{eq: nodal error f=0}
\|U^n_--u(t_n)\|\le Ct_n^{s-q}k^q\|A^su_0\|
\quad\text{for $1\le n\le N$ and $0\le s\le q\le2r-1$.}
\end{equation}
%%%%%%%%%%%%%%%%%%%%%%%%%%%%%%%%%%%%%%%%%%%%%%%%%%%%%%%%%%%%%%%%%%%%%%%%%%%%%%%
\section{Local Legendre polynomial basis}\label{sec: Legendre}
We now return to considering the general inhomogeneous problem
and describe a practical formulation of the DG scheme using local Legendre
polynomial expansions that will also play an essential role in our subsequent
analysis.

Let $P_j$ denote the Legendre polynomial of degree~$j$ with the usual
normalization $P_j(1)=1$, and recall that
\[
\int_{-1}^1P_i(\tau)P_j(\tau)\,\ud\tau=\frac{2\delta_{ij}}{2j+1}
\quad\text{and}\quad
P_j(-\tau)=(-1)^jP_j(\tau).
\]
Using the affine map~$\beta_n:[-1,1]\to[t_{n-1},t_n]$ given by
\begin{equation}\label{eq: beta_n}
\beta_n(\tau)=\tfrac12[(1-\tau)t_{n-1}+(1+\tau)t_n]
\quad\text{for $-1\le\tau\le1$,}
\end{equation}
we define local Legendre polynomials on the $n$th subinterval,
\[
p_{nj}(t)=P_j(\tau)\quad\text{for $t=\beta_n(\tau)$ and $-1\le\tau\le1$,}
\]
and note that
\begin{equation}\label{eq: pnj properties}
p_{nj}(t_n)=1\quad\text{and}\quad
\int_{I_n}p_{ni}(t)p_{nj}(t)\,\ud t=\frac{k_n\delta_{ij}}{2j+1}.
\end{equation}
The local Fourier--Legendre expansion of a function~$v$ is then, for $t\in I_n$,
\[
v(t)=\sum_{j=0}^\infty a_{nj}(v)p_{nj}(t)
\quad\text{where}\quad
a_{nj}(v)=\frac{2j+1}{k_n}\int_{I_n}v(t)p_{nj}(t)\,\ud t.
\]

In particular, for the DG solution~$U$ we put $U^{nj}=a_{nj}(U)\in\mathbb{H}$
so that
\[
U(t)=\sum_{j=0}^{r-1}U^{nj}p_{nj}(t)\quad\text{for $t\in I_n$.}
\]
Define \cite[Lemma~5.1]{McLean2020}
\[
G_{ij}=P_j(-1)P_i(-1)+\int_{-1}^1P_j'(\tau)P_i(\tau)\,\ud\tau
    =\begin{cases}
   (-1)^{i+j},&\text{if $i\ge j$,}\\
    1,&\text{if $i<j$,}
\end{cases}
\]
and $H_{ij}=\int_{-1}^1P_j(\tau)P_i(\tau)\,d\tau=\delta_{ij}/(2j+1)$; e.g., if
$r=4$ then
\[
\boldsymbol{G}=\left[\begin{array}{rrrr}
 1& 1& 1&\phantom{-}1\\
-1& 1& 1& 1\\
 1&-1& 1& 1\\
-1& 1&-1& 1\end{array}\right]
\quad\text{and}\quad
\boldsymbol{H}=\begin{bmatrix}
1&        &        &        \\
 &\tfrac13&        &        \\
 &        &\tfrac15&        \\
 &        &        &\tfrac17\\
\end{bmatrix}.
\]
By choosing a test function of the form $X(t)=p_{ni}(t)\chi$,
for~$t\in I_n$ and $\chi\in\mathbb{H}$, we find that the DG
equation~\eqref{eq: DG} implies
\begin{equation}\label{eq: DG system}
\sum_{j=0}^{r-1}(G_{ij}+k_nH_{ij}A)U^{nj}
    =\check{U}^{n-1,i}+\int_{I_n}f(t)p_{ni}(t)\,\ud t
%k_nH_{ii}a_{ni}(f)
\end{equation}
for $0\le i\le r-1$ and $1\le n\le N$, where
\[
\check U^{0i}=(-1)^iu_0\quad\text{and}\quad
\check U^{ni}=(-1)^i\sum_{j=0}^{r-1}U^{nj}\quad\text{for $n\ge1$.}
\]
Thus, given $U^{n-1,j}$ for $0\le j\le r-1$, by solving the (block)
$r\times r$ system~\eqref{eq: DG system} we obtain $U^{nj}$ for~$0\le j\le r-1$,
and hence $U(t)$ for~$t\in I_n$.  The existence and uniqueness of this solution
follows from the stability of the scheme~\cite[p.~205]{Thomee2006}. Notice that
\[
U^{n-1}_+=\sum_{j=0}^{r-1}(-1)^jU^{nj}\quad\text{and}\quad
U^{n-1}_-=\begin{cases}
        u_0&\text{if $n=1$,}\\
        \sum_{j=0}^{r-1}U^{n-1,j}&\text{if $2\le n\le N$.}
\end{cases}
\]
%%%%%%%%%%%%%%%%%%%%%%%%%%%%%%%%%%%%%%%%%%%%%%%%%%%%%%%%%%%%%%%%%%%%%%%%%%%%%%%
\section{Behaviour of the DG error}\label{sec: Error behaviour}

To prove our main results, we will make use of two projection operators.
The first is just the orthogonal
projector~$\Pi_r:L^2((0,T);\mathbb{L})\to\mathcal{X}_r$ defined by
\[
\int_0^T\iprod{\Pi_rv,X}\,\ud t=\int_0^T\iprod{v,X}\,\ud t
    \quad\text{for all $X\in\mathcal{X}_r$,}
\]
which has the explicit representation
\[
(\Pi_rv)(t)=\sum_{j=0}^{r-1}a_{nj}(v)p_{nj}(t)
    \quad\text{for $t\in I_n$ and $1\le n\le N$.}
\]
The second projector~$\widetilde\Pi_r:C([0,T];\mathbb{L})\to\mathcal{X}_r$ is
defined by the conditions~\cite[Equation (12.9)]{Thomee2006}
\begin{equation}\label{eq: Pi tilde}
(\widetilde\Pi_r v)^n_-=v(t_n)\qquad\text{and}\qquad
\int_{I_n}\iprod{\widetilde\Pi_rv,X'}\,\ud t=\int_{I_n}\iprod{v,X'}\,\ud t
\end{equation}
for all $X\in\mathcal{X}_r$ and for $1\le n\le N$.  The next lemma
shows that $\widetilde\Pi_ru$ is in fact the DG solution of the trivial
equation with~$A=0$; cf.~Chrysafinos and
Walkington~\cite[Section~2.2]{ChrysafinosWalkington2006}.

\begin{lemma}\label{lem: Pi tilde A=0}
If $u':(0,T]\to\mathbb{L}$ is integrable, then
\[
\iprod{(\widetilde\Pi_ru)^{n-1}_+,X^{n-1}_+}
    +\int_{I_n}\iprod{(\widetilde\Pi_r u)',X}\,\ud t
    =\iprod{u(t_{n-1}),X^{n-1}_+}+\int_{I_n}\iprod{u',X}\,\ud t.
\]
\end{lemma}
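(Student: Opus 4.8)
The plan is to recognise the asserted identity as the statement that $\widetilde\Pi_r u$ is itself a DG-type solution of the trivial equation: comparing with~\eqref{eq: DG}, the left-hand side is precisely the DG bilinear form applied to $\widetilde\Pi_r u$ with $A=0$, while the right-hand side supplies the data generated by~$u$. The whole argument rests on integration by parts in~$t$, which transfers the derivative between the two factors of the inner product, together with the two defining properties~\eqref{eq: Pi tilde} of~$\widetilde\Pi_r$.

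First I would apply integration by parts to the integral on the left. Since both $\widetilde\Pi_r u$ and $X$ restrict to polynomials, and hence to absolutely continuous functions, on~$\overline{I_n}$, this gives
\[
\int_{I_n}\iprod{(\widetilde\Pi_ru)',X}\,\ud t
 =\iprod{(\widetilde\Pi_ru)^n_-,X^n_-}-\iprod{(\widetilde\Pi_ru)^{n-1}_+,X^{n-1}_+}
 -\int_{I_n}\iprod{\widetilde\Pi_ru,X'}\,\ud t.
\]
The nodal condition in~\eqref{eq: Pi tilde} then replaces $(\widetilde\Pi_ru)^n_-$ by $u(t_n)$ in the first boundary term, while the orthogonality condition in~\eqref{eq: Pi tilde}, valid for the given $X\in\mathcal{X}_r$, replaces the final integral by $\int_{I_n}\iprod{u,X'}\,\ud t$.

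Substituting these back, the boundary term $\iprod{(\widetilde\Pi_ru)^{n-1}_+,X^{n-1}_+}$ cancels precisely against the matching term added on the left-hand side of the claim, so the left-hand side collapses to $\iprod{u(t_n),X^n_-}-\int_{I_n}\iprod{u,X'}\,\ud t$. It remains to show the right-hand side equals the same expression. For this I would integrate by parts once more, now in $\int_{I_n}\iprod{u',X}\,\ud t$; because $u$ is continuous, indeed absolutely continuous since $u'$ is integrable, we have $u^n_-=u(t_n)$ and $u^{n-1}_+=u(t_{n-1})$, so
\[
\int_{I_n}\iprod{u',X}\,\ud t
 =\iprod{u(t_n),X^n_-}-\iprod{u(t_{n-1}),X^{n-1}_+}-\int_{I_n}\iprod{u,X'}\,\ud t.
\]
Adding $\iprod{u(t_{n-1}),X^{n-1}_+}$ reproduces exactly $\iprod{u(t_n),X^n_-}-\int_{I_n}\iprod{u,X'}\,\ud t$, matching the left-hand side and closing the argument.

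The only delicate point, and thus the main obstacle, is justifying the vector-valued integration by parts and the existence of the one-sided limits at the endpoints, particularly on $I_1=(0,t_1)$ where $u'$ may be singular as $t\to0$. Here the integrability hypothesis on~$u'$ is what does the work: it guarantees that $u(t)=u_0+\int_0^t u'\,\ud s$ is absolutely continuous on $[0,T]$, which legitimises both the endpoint values $u(t_{n-1})$, $u(t_n)$ and the integration-by-parts formula involving~$u$; for the piecewise polynomial $\widetilde\Pi_r u$ the corresponding formula is elementary. With these technical points dispatched, the identity follows immediately from the two substitutions and the cancellation described above.
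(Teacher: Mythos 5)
Your argument is correct and follows exactly the paper's own proof: one integration by parts on $\int_{I_n}\iprod{(\widetilde\Pi_r u)',X}\,\ud t$, substitution via the two defining properties~\eqref{eq: Pi tilde}, and a second integration by parts on $\int_{I_n}\iprod{u',X}\,\ud t$ to match the two sides. The additional remarks on absolute continuity of $u$ under the integrability hypothesis are a sensible elaboration of details the paper leaves implicit.
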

\begin{proof}
Integrating by parts and using the properties~\eqref{eq: Pi tilde}
of~$\widetilde\Pi_r$, we have
\begin{align*}
\int_{I_n}\iprod{(\widetilde\Pi_r u)',X}\,\ud t
    &=\iprod{(\widetilde\Pi_r u)^n_-,X^n_-}
    -\iprod{(\widetilde\Pi_r u)^{n-1}_+,X^{n-1}_+}
    -\int_{I_n}\iprod{\widetilde\Pi_r u,X'}\,\ud t\\
    &=\iprod{u(t_n),X^n_-}
    -\iprod{(\widetilde\Pi_r u)^{n-1}_+,X^{n-1}_+}
    -\int_{I_n}\iprod{u,X'}\,\ud t,
%    &=\iprod{u(t_{n-1}),X^{n-1}_+}
%    -\iprod{(\widetilde\Pi_r u)^{n-1}_+,X^{n-1}_+}
%    +\int_{I_n}\iprod{u',X}\,\ud t,
\end{align*}
and a second integration by parts then yields the desired identity.
\end{proof}

The Legendre expansion of~$\widetilde\Pi_rv$ coincides with that of~$\Pi_rv$,
except for the coefficient of~$p_{n,r-1}$.  Below, we denote the closure of the
$n$th time interval by~$\bar I_n=[t_{n-1},t_n]$.

\begin{lemma}\label{lem: Pi tilde a tilde}
If $v:\bar I_n\to\mathbb{L}$ is continuous, then
\[
(\widetilde\Pi_rv)(t)=\sum_{j=0}^{r-2}a_{nj}(v)p_{nj}(t)
    +\tilde a_{n,r-1}(v)p_{n,r-1}(t)\quad\text{for $t\in I_n$,}
\]
where
\[
\tilde a_{n,r-1}(v)=v(t_n)-(\Pi_{r-1}v)^n_-
    =v(t_n)-\sum_{j=0}^{r-2}a_{nj}(v).
\]
\end{lemma}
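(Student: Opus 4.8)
The plan is to expand $\widetilde\Pi_r v$ in the local Legendre basis and read off its coefficients one at a time from the two defining conditions in~\eqref{eq: Pi tilde}. Since $(\widetilde\Pi_r v)\vert_{I_n}$ is a polynomial of degree at most~$r-1$, we may write
\[
(\widetilde\Pi_r v)(t)=\sum_{j=0}^{r-1}b_{nj}\,p_{nj}(t)\quad\text{for $t\in I_n$,}
\]
with coefficients~$b_{nj}$ to be determined. By the orthogonality relation in~\eqref{eq: pnj properties} we have $b_{nj}=a_{nj}(\widetilde\Pi_r v)$, so it suffices to prove that $b_{nj}=a_{nj}(v)$ for $0\le j\le r-2$ and that $b_{n,r-1}=\tilde a_{n,r-1}(v)$.

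First I would use the integral condition in~\eqref{eq: Pi tilde} to pin down the low-order coefficients. The key observation is that, as $X$ ranges over $\mathcal{X}_r$, the restriction~$X'\vert_{I_n}$ ranges over all of~$\mathbb{P}_{r-2}(\mathbb{H})$: given any $w\in\mathbb{P}_{r-2}(\mathbb{H})$, one may take $X\vert_{I_n}(t)=\int_{t_{n-1}}^tw(s)\,\ud s\in\mathbb{P}_{r-1}(\mathbb{H})$, extended off~$I_n$ so that $X\in\mathcal{X}_r$, whence $X'\vert_{I_n}=w$. The second condition in~\eqref{eq: Pi tilde} is therefore equivalent to the statement that $\widetilde\Pi_r v-v$ is $L_2(I_n)$-orthogonal to every $w\in\mathbb{P}_{r-2}(\mathbb{H})$. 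Testing with $w=p_{ni}\chi$ for $\chi\in\mathbb{H}$ gives $\iprod{b_{ni}-a_{ni}(v),\chi}=0$, and since $\mathbb{H}$ is dense in~$\mathbb{L}$ we conclude $b_{nj}=a_{nj}(v)$ for $0\le j\le r-2$.

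It remains to fix the top coefficient~$b_{n,r-1}$ using the nodal condition. Evaluating the expansion at the right endpoint and invoking $p_{nj}(t_n)=1$ from~\eqref{eq: pnj properties}, I would write
\[
(\widetilde\Pi_r v)^n_-=\sum_{j=0}^{r-1}b_{nj}\,p_{nj}(t_n)=\sum_{j=0}^{r-1}b_{nj},
\]
and the first condition in~\eqref{eq: Pi tilde} forces this to equal~$v(t_n)$. Hence $b_{n,r-1}=v(t_n)-\sum_{j=0}^{r-2}a_{nj}(v)$. Finally, because $(\Pi_{r-1}v)(t)=\sum_{j=0}^{r-2}a_{nj}(v)p_{nj}(t)$ on~$I_n$ and again $p_{nj}(t_n)=1$, the same sum equals $(\Pi_{r-1}v)^n_-$, giving the two stated forms of~$\tilde a_{n,r-1}(v)$.

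The computation is short, and the only point that genuinely needs care is the surjectivity claim that $X'\vert_{I_n}$ sweeps out all of~$\mathbb{P}_{r-2}(\mathbb{H})$ together with the attendant density argument, since this is what converts the weak integral condition into the clean assertion that the first $r-1$ Legendre coefficients are left unchanged by~$\widetilde\Pi_r$. Everything else follows mechanically from the normalization $p_{nj}(t_n)=1$ and the $L_2(I_n)$-orthogonality of the local Legendre basis.
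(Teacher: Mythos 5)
Your proposal is correct and follows essentially the same route as the paper: test the integral condition in~\eqref{eq: Pi tilde} with $X'\vert_{I_n}=p_{nj}$ to see that the first $r-1$ Legendre coefficients of~$\widetilde\Pi_rv$ agree with those of~$v$, then use the nodal condition together with $p_{nj}(t_n)=1$ to identify the top coefficient. The extra care you take over the surjectivity of $X\mapsto X'\vert_{I_n}$ and the density of~$\mathbb{H}$ in~$\mathbb{L}$ merely fills in details the paper leaves implicit.
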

\begin{proof}
By choosing $X'\vert_{I_n}=p_{nj}$ in the second property
of~\eqref{eq: Pi tilde}, we see that
\[
a_{nj}(\widetilde\Pi_rv)=a_{nj}(v)\quad\text{for $0\le j\le r-2$,}
\]
implying that $\widetilde\Pi_rv=\Pi_{r-1}v+\lambda p_{n,r-1}$ for
some $\lambda\in\mathbb{H}$. Since $p_{nj}(t_n)=P_j(1)=1$, the first property
in~\eqref{eq: Pi tilde} gives
\[
v(t_n)=(\widetilde\Pi_rv)^n_-=(\Pi_{r-1}v)^n_-+\lambda\quad\text{with}\quad
(\Pi_{r-1}v)^n_-=\sum_{j=0}^{r-2}a_{nj}(v),
\]
showing that $\lambda=\tilde a_{n,r-1}(v)$.
\end{proof}

By mapping to the reference element~$(-1,1)$, applying the Peano kernel
theorem, and then mapping back to~$I_n$, we find~\cite[p.~137]{McLean2020}
\begin{equation}\label{eq: anj estimate}
\|a_{nj}(v)\|\le Ck_n^{j-1}\int_{I_n}\|v^{(j)}(t)\|\,\ud t
    \le Ck_n^j\|v^{(j)}\|_{I_n}\quad\text{for $j\ge0$,}
\end{equation}
and
\begin{equation}\label{eq: v - Pi_r v}
\|v-\Pi_rv\|_{I_n}\le Ck_n^{\ell-1}\int_{I_n}\|v^{(\ell)}(t)\|\,\ud t
    \le Ck_n^\ell\|v^{(\ell)}\|_{I_n}\quad\text{for $1\le\ell\le r$.}
\end{equation}

\begin{theorem}\label{thm: Pi tilde error}
For $1\le n\le N$, if $v:\bar I_n\to\mathbb{L}$ is $C^{r+1}$ then
\[
\bigl\|\widetilde\Pi_rv-v+a_{nr}(v)(p_{nr}-p_{n,r-1})\bigr\|_{I_n}
    \le Ck_n^{r+1}\|v^{(r+1)}\|_{I_n}.
\]
\end{theorem}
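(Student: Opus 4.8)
The plan is to measure $\widetilde\Pi_rv$ against the orthogonal projection $\Pi_{r+1}v$ onto piecewise polynomials of degree at most~$r$, so that the whole estimate collapses onto the projection-error bound~\eqref{eq: v - Pi_r v}. First I would invoke \cref{lem: Pi tilde a tilde} to write $\widetilde\Pi_rv=\Pi_{r-1}v+\tilde a_{n,r-1}(v)p_{n,r-1}$, and note the explicit expansion $\Pi_{r+1}v=\Pi_{r-1}v+a_{n,r-1}(v)p_{n,r-1}+a_{nr}(v)p_{nr}$. Subtracting gives
\[
\widetilde\Pi_rv-\Pi_{r+1}v=\bigl[\tilde a_{n,r-1}(v)-a_{n,r-1}(v)\bigr]p_{n,r-1}-a_{nr}(v)p_{nr}.
\]

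The one nontrivial step is to evaluate the coefficient difference. Using the formula $\tilde a_{n,r-1}(v)=v(t_n)-\sum_{j=0}^{r-2}a_{nj}(v)$ from \cref{lem: Pi tilde a tilde} together with $p_{nj}(t_n)=1$, which gives $(\Pi_{r+1}v)^n_-=\sum_{j=0}^{r}a_{nj}(v)$, I would show that
\[
\tilde a_{n,r-1}(v)-a_{n,r-1}(v)=v(t_n)-\sum_{j=0}^{r-1}a_{nj}(v)=v(t_n)-(\Pi_{r+1}v)^n_-+a_{nr}(v).
\]
Substituting this back, the two $a_{nr}(v)$ contributions assemble into $-a_{nr}(v)(p_{nr}-p_{n,r-1})$, so that
\[
\widetilde\Pi_rv-\Pi_{r+1}v+a_{nr}(v)(p_{nr}-p_{n,r-1})=\bigl[v(t_n)-(\Pi_{r+1}v)^n_-\bigr]p_{n,r-1}.
\]

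To finish, I would add and subtract $\Pi_{r+1}v$ to obtain
\[
\widetilde\Pi_rv-v+a_{nr}(v)(p_{nr}-p_{n,r-1})=\bigl[v(t_n)-(\Pi_{r+1}v)^n_-\bigr]p_{n,r-1}-(v-\Pi_{r+1}v),
\]
and bound each term by the triangle inequality. Since $\|p_{n,r-1}\|_{I_n}=\max_{-1\le\tau\le1}|P_{r-1}(\tau)|=1$ and $\|v(t_n)-(\Pi_{r+1}v)^n_-\|=\|(v-\Pi_{r+1}v)^n_-\|\le\|v-\Pi_{r+1}v\|_{I_n}$, both contributions are controlled by $\|v-\Pi_{r+1}v\|_{I_n}$, which \eqref{eq: v - Pi_r v} (applied with $r+1$ in place of~$r$ and $\ell=r+1$) bounds by $Ck_n^{r+1}\|v^{(r+1)}\|_{I_n}$; absorbing the resulting factor of~$2$ into~$C$ yields the claim. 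I expect the main obstacle to be purely the bookkeeping in the second paragraph: correctly tracking the nodal value $(\Pi_{r+1}v)^n_-$ and verifying that the stray $a_{nr}(v)p_{n,r-1}$ term combines with $-a_{nr}(v)p_{nr}$ to cancel against the added $a_{nr}(v)(p_{nr}-p_{n,r-1})$. Once that identity is secured, the estimate is immediate.
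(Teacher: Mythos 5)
Your proposal is correct and follows essentially the same route as the paper: compare $\widetilde\Pi_rv$ with $\Pi_{r+1}v$ via \cref{lem: Pi tilde a tilde}, identify the coefficient discrepancy as $v(t_n)-(\Pi_{r+1}v)^n_-$ (the paper gets this by letting $t\to t_n^-$ in the difference identity rather than by summing coefficients, but the identity is the same), and conclude from~\eqref{eq: v - Pi_r v} applied at degree $r+1$ with $\ell=r+1$. The final identity you derive matches the paper's display verbatim, so no further comment is needed.
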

\begin{proof}
By \cref{lem: Pi tilde a tilde}, if $t\in I_n$ then
\[
(\widetilde\Pi_rv)(t)=(\Pi_{r-1}v)(t)+\tilde a_{n,r-1}(v)p_{n,r-1}(t)
\]
and
\[
(\Pi_{r+1}v)(t)=(\Pi_{r-1}v)(t)+a_{n,r-1}(v)p_{n,r-1}(t)+a_{nr}(v)p_{nr}(t),
\]
so
\begin{equation}\label{eq: Pi tilde Pi}
(\widetilde\Pi_rv)(t)-(\Pi_{r+1}v)(t)
    =[\tilde a_{n,r-1}(v)-a_{n,r-1}(v)]p_{n,r-1}(t)-a_{nr}(v)p_{nr}(t).
\end{equation}
Taking the limit as $t\to t_n^-$, and recalling that $p_{nj}(t_n)=1$, we see
that
\begin{equation}\label{eq: an tilde an}
v(t_n)-(\Pi_{r+1}v)^n_-=\tilde a_{n,r-1}(v)-a_{n,r-1}(v)-a_{nr}(v).
\end{equation}
Using \eqref{eq: an tilde an} to eliminate $\tilde a_{n,r-1}(v)$
in~\eqref{eq: Pi tilde Pi}, we find that
\[
\widetilde\Pi_rv-v+a_{nr}(v)[p_{nr}-p_{n,r-1}]=
(\Pi_{r+1}v-v)+[v(t_n)-(\Pi_{r+1}v)^n_-]p_{n,r-1}
\]
on $I_n$, and the desired estimate follows at once from~\eqref{eq: v - Pi_r v}.
\end{proof}

The following theorem and its corollary, together with the superconvergence
result \eqref{eq: nodal superconvergence}, show that
\begin{equation}\label{eq: super-approx}
\|U-\widetilde\Pi_ru\|_{I_n}=O(k_n^{r+1})\quad\text{for $r\ge2$,}
\end{equation}
provided $u$ is sufficiently regular.

\begin{theorem}\label{thm: U - Pi tilde u}
For $1\le n\le N$, if $Au:\bar I_n\to\mathbb{L}$ is $C^r$, then
\[
\|U-\widetilde\Pi_ru\|_{I_n}\le C\|U^{n-1}_--u(t_{n-1})\|
    +Ck_n^{r+1}\|Au^{(r)}\|_{I_n}.
\]
\end{theorem}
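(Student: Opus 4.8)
The plan is to show that the difference $W=U-\widetilde\Pi_ru$ is itself a discontinuous Galerkin solution on~$I_n$, driven by a source term of size $O(k_n^r)$ in the $L_\infty$-norm together with the incoming nodal error, and then to apply the $L_\infty$-stability estimate of Schmutz and Wihler~\cite{SchmutzWihler2019}. First I would eliminate~$f$ from the DG equation~\eqref{eq: DG} using $f=u'+Au$, and subtract from it the identity of \cref{lem: Pi tilde A=0}. The first-derivative and jump terms combine into the corresponding terms for~$W$, and the zeroth-order contributions collapse to $\int_{I_n}\iprod{Au,X}\,\ud t$ on the right. Writing $AU=AW+A\widetilde\Pi_ru$ and moving $\int_{I_n}\iprod{AW,X}\,\ud t$ to the left, I obtain that, for every $X\in\mathcal{X}_r$,
\[
\iprod{W^{n-1}_+,X^{n-1}_+}+\int_{I_n}\iprod{W'+AW,X}\,\ud t
    =\iprod{W^{n-1}_-,X^{n-1}_+}+\int_{I_n}\iprod{g,X}\,\ud t,
\]
where $g=A(u-\widetilde\Pi_ru)$. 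Since $(\widetilde\Pi_ru)^{n-1}_-=u(t_{n-1})$, the incoming value is $W^{n-1}_-=U^{n-1}_--u(t_{n-1})$, the nodal error from the previous step; this equation is exactly the DG scheme for~$W$ on~$I_n$ with initial value~$W^{n-1}_-$ and source~$g$.

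Next I would estimate the source. Because $A$ is a fixed operator, it commutes with the time projectors, so $g=Au-\widetilde\Pi_r(Au)$. Writing $w=Au$, which is $C^r$ by hypothesis, and using \cref{lem: Pi tilde a tilde}, I have $\widetilde\Pi_rw-\Pi_rw=[w(t_n)-(\Pi_rw)^n_-]\,p_{n,r-1}$, whose $I_n$-norm is at most $\|w-\Pi_rw\|_{I_n}$ since $\sup_{I_n}|p_{n,r-1}|=1$. Combining this with the projection bound~\eqref{eq: v - Pi_r v} taken at $\ell=r$ and the triangle inequality gives
\[
\|g\|_{I_n}=\|w-\widetilde\Pi_rw\|_{I_n}\le Ck_n^r\|Au^{(r)}\|_{I_n},
\]
and hence $\int_{I_n}\|g\|\,\ud t\le Ck_n^{r+1}\|Au^{(r)}\|_{I_n}$.

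Finally, I would invoke the $L_\infty$-stability estimate of~\cite{SchmutzWihler2019}, which bounds the $L_\infty(I_n)$-norm of a DG solution by a constant multiple of its incoming nodal value plus the integral of its source, with the constant independent of~$n$ and~$k_n$. Applied to the equation for~$W$ above this yields $\|W\|_{I_n}\le C\|W^{n-1}_-\|+C\int_{I_n}\|g\|\,\ud t$, and substituting the source estimate together with $\|W^{n-1}_-\|=\|U^{n-1}_--u(t_{n-1})\|$ gives the claimed bound. I expect the main obstacle to be the rigorous justification that $W$ satisfies the hypotheses of the cited stability result, so that it may be used as a black box with the correct incoming data and source, together with the commutation $A\widetilde\Pi_ru=\widetilde\Pi_r(Au)$, which requires $u(t)$ to lie in the domain of~$A$ throughout~$\bar I_n$; the algebra of subtracting the two identities and the source estimate are otherwise routine.
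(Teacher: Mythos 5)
Your proposal is correct and follows essentially the same route as the paper: subtract the identity of \cref{lem: Pi tilde A=0} from the DG equation (with $f=u'+Au$) to show that $W=U-\widetilde\Pi_ru$ solves the DG scheme on~$I_n$ with source $A(u-\widetilde\Pi_ru)$ and incoming value $U^{n-1}_--u(t_{n-1})$, then apply the Schmutz--Wihler $L_\infty$-stability bound and the projection error estimate. The only cosmetic difference is that the cited stability result carries the source as $k_n\int_{I_n}\|\rho\|^2\,\ud t$ rather than $\bigl(\int_{I_n}\|\rho\|\,\ud t\bigr)^2$, which is immaterial once the source is bounded in $L_\infty(I_n)$; your explicit use of \cref{lem: Pi tilde a tilde} to pass from $\Pi_r$ to $\widetilde\Pi_r$ in the source estimate is in fact slightly more careful than the paper's direct appeal to \eqref{eq: v - Pi_r v}.
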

\begin{proof}
It follows from \cref{lem: Pi tilde A=0} that $\widetilde\Pi_ru$ satisfies
\begin{multline*}
\iprod{(\widetilde\Pi_ru)^{n-1}_+,X^{n-1}_+}
    +\int_{I_n}\iprod{(\widetilde\Pi_r u)'+A\widetilde\Pi_r u,X}\,\ud t\\
    =\iprod{u(t_{n-1}),X^{n-1}_+}
    +\int_{I_n}\iprod{u'+A\widetilde\Pi_r u,X}\,\ud t,
\end{multline*}
whereas $U$ satisfies
\[
\iprod{U^{n-1}_+,X^{n-1}_+}+\int_{I_n}\iprod{U'+AU,X}\,\ud t
    =\iprod{U^{n-1}_-,X^{n-1}_+}+\int_{I_n}\iprod{u'+Au,X}\,\ud t,
\]
for all $X\in\mathcal{X}_r$.  Letting $\rho=A(u-\widetilde\Pi_r u)$ and noting
$(\widetilde\Pi_r u)^{n-1}_-=u(t_{n-1})$, we see that the piecewise
polynomial~$\varepsilon=U-\widetilde\Pi_r u\in\mathcal{X}_r$ satisfies
\begin{equation}\label{eq: epsilon}
\iprod{\varepsilon^{n-1}_+,X^{n-1}_+}
    +\int_{I_n}\iprod{\varepsilon'+A\varepsilon,X}\,\ud t
    =\iprod{\varepsilon^{n-1}_-,X^{n-1}_+}+\int_{I_n}\iprod{\rho,X}\,\ud t
\end{equation}
for all $X\in\mathcal{X}_r$, with $\varepsilon^{n-1}_-=U^{n-1}_--u(t_{n-1})$.
A stability result of Schmutz and
Wihler~\cite[Proposition~3.18]{SchmutzWihler2019} yields the estimate
\begin{equation}\label{eq: stable L infty}
\|\varepsilon\|_{I_n}^2\le C\biggl(\|\varepsilon^{n-1}_-\|^2
    +k_n\int_{I_n}\|\rho\|^2\,\ud t\biggr),
\end{equation}
that is,
\[
\|U-\widetilde\Pi_r u\|_{I_n}^2\le C\biggl(\|U^{n-1}_--u(t_{n-1})\|^2
    +k_n\int_{I_n}\|\rho\|^2\,\ud t\biggr).
\]
By putting $v=Au$ in~\eqref{eq: v - Pi_r v} we find
$k_n\int_{I_n}\|\rho\|^2\,\ud t\le k_n^2\|\rho\|_{I_n}^2\le
C(k_n^{r+1}\|Au^{(r)}\|_{I_n})^2$, and the
desired estimate follows at once.
\end{proof}

We are now able to establish the claim~\eqref{eq: error} from the Introduction.

\begin{theorem}\label{thm: main result}
For $1\le n\le N$, if $Au^{(r)}$ and $u^{(r+1)}$ are continuous on~$\bar I_n$,
then
\begin{multline*}
\|U-u+a_{nr}(u)(p_{nr}-p_{n,r-1})\|_{I_n}\le C\|U^{n-1}_--u(t_{n-1})\|\\
    +Ck_n^{r+1}\bigl(\|Au^{(r)}\|_{I_n}+\|u^{(r+1)}\|_{I_n}\bigr).
\end{multline*}
\end{theorem}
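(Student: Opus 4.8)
The plan is to derive the estimate directly by combining the two preceding theorems through the triangle inequality, using $\widetilde\Pi_r u$ as an intermediary. First I would split the error as the exact algebraic identity
\[
U-u+a_{nr}(u)(p_{nr}-p_{n,r-1})
=(U-\widetilde\Pi_ru)+\bigl(\widetilde\Pi_ru-u+a_{nr}(u)(p_{nr}-p_{n,r-1})\bigr)
\]
on $I_n$. Since $\|\cdot\|_{I_n}$ is the supremum over $I_n$ of the $\mathbb{L}$-norm, it obeys the triangle inequality, so
\[
\|U-u+a_{nr}(u)(p_{nr}-p_{n,r-1})\|_{I_n}
\le\|U-\widetilde\Pi_ru\|_{I_n}
+\bigl\|\widetilde\Pi_ru-u+a_{nr}(u)(p_{nr}-p_{n,r-1})\bigr\|_{I_n}.
\]

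Next I would bound each term on the right. The first is controlled by \cref{thm: U - Pi tilde u}, whose hypothesis that $Au$ be $C^r$ on~$\bar I_n$ is exactly the present assumption that $Au^{(r)}$ is continuous there; this gives $\|U-\widetilde\Pi_ru\|_{I_n}\le C\|U^{n-1}_--u(t_{n-1})\|+Ck_n^{r+1}\|Au^{(r)}\|_{I_n}$. The second term is handled by \cref{thm: Pi tilde error} applied with $v=u$, whose hypothesis that $v$ be $C^{r+1}$ matches the assumption that $u^{(r+1)}$ is continuous on~$\bar I_n$; this yields the bound $Ck_n^{r+1}\|u^{(r+1)}\|_{I_n}$. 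Adding the two bounds and absorbing constants produces precisely the claimed inequality.

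In this argument there is essentially no obstacle, since the substantive analytical work has already been carried out: the $L_\infty$-stability estimate of Schmutz and Wihler invoked in \cref{thm: U - Pi tilde u}, and the Peano-kernel projection bounds \eqref{eq: anj estimate}--\eqref{eq: v - Pi_r v} behind \cref{thm: Pi tilde error}. The only points requiring care are the bookkeeping of hypotheses—verifying that the regularity assumptions on $Au^{(r)}$ and $u^{(r+1)}$ are exactly what the two cited theorems demand—and observing that $a_{nr}(u)$ enters with the same sign and coefficient in both the statement and in \cref{thm: Pi tilde error}, so that the intermediary $\widetilde\Pi_r u$ term cancels cleanly and no residual contribution proportional to $p_{nr}-p_{n,r-1}$ survives.
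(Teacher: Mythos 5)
Your proposal is correct and follows exactly the paper's own proof: the same decomposition through the intermediary $\widetilde\Pi_r u$, followed by the triangle inequality and an application of \cref{thm: Pi tilde error,thm: U - Pi tilde u}. No further comment is needed.
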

\begin{proof}
Write
\[
U-u+a_{nr}(u)(p_{nr}-p_{n,r-1})=(U-\widetilde\Pi_ru)+\bigl(\widetilde\Pi_ru-u
+a_{nr}(u)(p_{nr}-p_{n,r-1})\bigr),
\]
and apply \cref{thm: Pi tilde error,thm: U - Pi tilde u}.
\end{proof}

We therefore have the following estimate for the homogeneous problem expressed
in terms of the initial data.

\begin{corollary}\label{cor: main result}
Assume $k_n\le Ck_{n-1}$ for~$2\le n\le N$ so that \eqref{eq: nodal error f=0}
holds. If $f\equiv0$, then for $0\le s\le r+1$~and $2\le n\le N$,
\[
\|U-u+a_{nr}(u)(p_{nr}-p_{n,r-1})\|_{I_n}\le C
    t_n^{s-(r+1)}k^{r+1}\|A^su_0\|.
\]
\end{corollary}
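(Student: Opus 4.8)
The plan is to bound each of the three terms on the right-hand side of \cref{thm: main result} separately, expressing each in terms of the initial data~$u_0$ by means of the regularity estimate~\eqref{eq: regularity f=0} and the nodal error bound~\eqref{eq: nodal error f=0}, and then to consolidate the resulting singular weights, which naturally appear at the left-hand endpoint~$t_{n-1}$, into the single weight~$t_n^{s-(r+1)}$ that figures in the claim.

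For the two \emph{a priori} terms I would argue as follows. Applying~\eqref{eq: regularity f=0} with $q=1$, $\ell=r$ gives $\|Au^{(r)}(t)\|\le Ct^{s-(r+1)}\|A^su_0\|$, and with $q=0$, $\ell=r+1$ gives $\|u^{(r+1)}(t)\|\le Ct^{s-(r+1)}\|A^su_0\|$, both valid for $0\le s\le r+1$; since the exponent $s-(r+1)$ is nonpositive, $t^{s-(r+1)}$ is nonincreasing in~$t$, so its supremum over~$I_n$ is attained at~$t_{n-1}$ and therefore $\|Au^{(r)}\|_{I_n}+\|u^{(r+1)}\|_{I_n}\le Ct_{n-1}^{s-(r+1)}\|A^su_0\|$. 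Together with $k_n\le k$ this controls the two $k_n^{r+1}$~terms. For the nodal term I would invoke~\eqref{eq: nodal error f=0} at the node~$t_{n-1}$ with exponent $q=r+1$, which is admissible precisely when $r+1\le2r-1$, that is $r\ge2$, yielding $\|U^{n-1}_--u(t_{n-1})\|\le Ct_{n-1}^{s-(r+1)}k^{r+1}\|A^su_0\|$.

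It remains to replace the weight~$t_{n-1}^{s-(r+1)}$ by~$t_n^{s-(r+1)}$, and this is where the hypothesis $k_n\le Ck_{n-1}$ enters. For $n\ge2$ one has $t_{n-1}\ge k_{n-1}$, so $k_n\le Ck_{n-1}\le Ct_{n-1}$ and hence $t_n=t_{n-1}+k_n\le(1+C)t_{n-1}$; because $s-(r+1)\le0$ this bounded-growth estimate gives $t_{n-1}^{s-(r+1)}\le C't_n^{s-(r+1)}$. Substituting into all three bounds and combining then yields the claimed estimate.

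The individual steps are routine; the main things to get right are the admissibility constraint $q=r+1\le2r-1$, so that the corollary as stated is really a statement for $r\ge2$, consistent with the superconvergence~\eqref{eq: nodal superconvergence} already used for~\eqref{eq: super-approx}, and the weight-consolidation step, whose correctness hinges on the sign of the exponent together with the restriction $n\ge2$. I expect the weight consolidation to be the only genuinely delicate point, since it is the sole place where both the step-ratio hypothesis $k_n\le Ck_{n-1}$ and the restriction $n\ge2$ are actually needed.
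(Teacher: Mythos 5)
Your argument is correct and is essentially the paper's own proof: take $q=r+1$ in \eqref{eq: nodal error f=0} for the nodal term, use \eqref{eq: regularity f=0} for $\|Au^{(r)}\|_{I_n}$ and $\|u^{(r+1)}\|_{I_n}$ (the paper notes these coincide since $Au^{(r)}=-u^{(r+1)}$ when $f\equiv0$), and absorb the weight via $t_n\le t_{n-1}+Ck_{n-1}\le Ct_{n-1}$. Your observation that $q=r+1\le 2r-1$ forces $r\ge2$ is a valid point that the paper leaves implicit.
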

\begin{proof}
Taking $q=r+1$ in~\eqref{eq: nodal error f=0} yields
\[
\|U^{n-1}_--u(t_{n-1})\|\le C
    t_{n-1}^{s-(r+1)}k^{r+1}\|A^su_0\|,
\]
and using \eqref{eq: regularity f=0} we have
$\|Au^{(r)}(t)\|=\|u^{(r+1)}(t)\|\le Ct^{s-(r+1)}\|A^su_0\|$.  The result
follows for~$n\ge2$ after noting that
$t_n=t_{n-1}+k_n\le t_{n-1}+Ck_{n-1}\le Ct_{n-1}$.
\end{proof}

\begin{remark}\label{remark: Adjerid et al}
In their proof of~\eqref{eq: error} for the scalar linear problem
\[
u'-au=0\quad\text{for $t>0$, with $u(0)=u_0$,}
\]
Adjerid et al.~\cite[Theorem~3]{AdjeridEtAl2002} use an inductive argument
to show an expansion of the form
\[
U(t)-u(t)=\sum_{j=r}^{2r-2}Q_{nj}(t)\,k_n^j+O(k_n^{2r-1})\quad
\text{for $t\in I_n$,}
\]
where $Q_{nj}\in\mathbb{P}_{j-1}$ and
$Q_{nr}(t)=c_{np}[p_{nr}(t)-p_{n,r-1}(t)]$ for a constant~$c_{np}$.
They extend this result to a homogeneous linear system of
ODEs~$\bs{u}'-\bs{A}\bs{u}=\bs{0}$, then a nonlinear scalar
problem~$u'-f(u)=0$, and finally a nonlinear
system~$\bs{u}'-\bs{f}(\bs{u})=\bs{0}$.
\end{remark}

\begin{remark}\label{remark: A(t)}
The proof of \cref{thm: U - Pi tilde u} is largely unaffected if the elliptic
term is permitted to have time-dependent coefficients, resulting in a
time-dependent operator~$A(t)$. The main issue is to verify the stability
property~\eqref{eq: stable L infty} for this more general setting.  The only
other complication is the estimation of~$\rho(t)$. Consider, for example,
$A(t)u(x,t)=-\nabla\cdot\bigl(a(x,t)\nabla u(x,t)\bigr)$.
Since $A(t)u(x,t)$ is of the form~$\sum_{m=1}^M c_m(x,t)B_mu(x,t)$, where each
$B_m$ is a second-order linear differential operator involving only the spatial
variables~$x$, it follows that
\[
\rho(t)=A(t)\bigl(u(t)-\widetilde{\Pi}_ru(t)\bigr)
    =\sum_{m=1}^Mc_m(x,t)\bigl(B_mu(t)-\widetilde{\Pi}_rB_mu(t)\bigr),
\]
and the final step of the proof becomes
\[
k_n\int_{I_n}\|\rho\|^2\,\ud t\le
Ck_n^{2(r+1)}\sum_{m=1}^M\|B_mu^{(r)}\|_{I_n}^2.
\]
Of course, to exploit this generalization of \cref{thm: U - Pi tilde u}, it
would also be necessary to verify the superconvergent error bounds for~$U^n_-$
in this case.
\end{remark}
%%%%%%%%%%%%%%%%%%%%%%%%%%%%%%%%%%%%%%%%%%%%%%%%%%%%%%%%%%%%%%%%%%%%%%%%%%%%%%%

\begin{figure}
\centering
\includegraphics[scale=0.6]{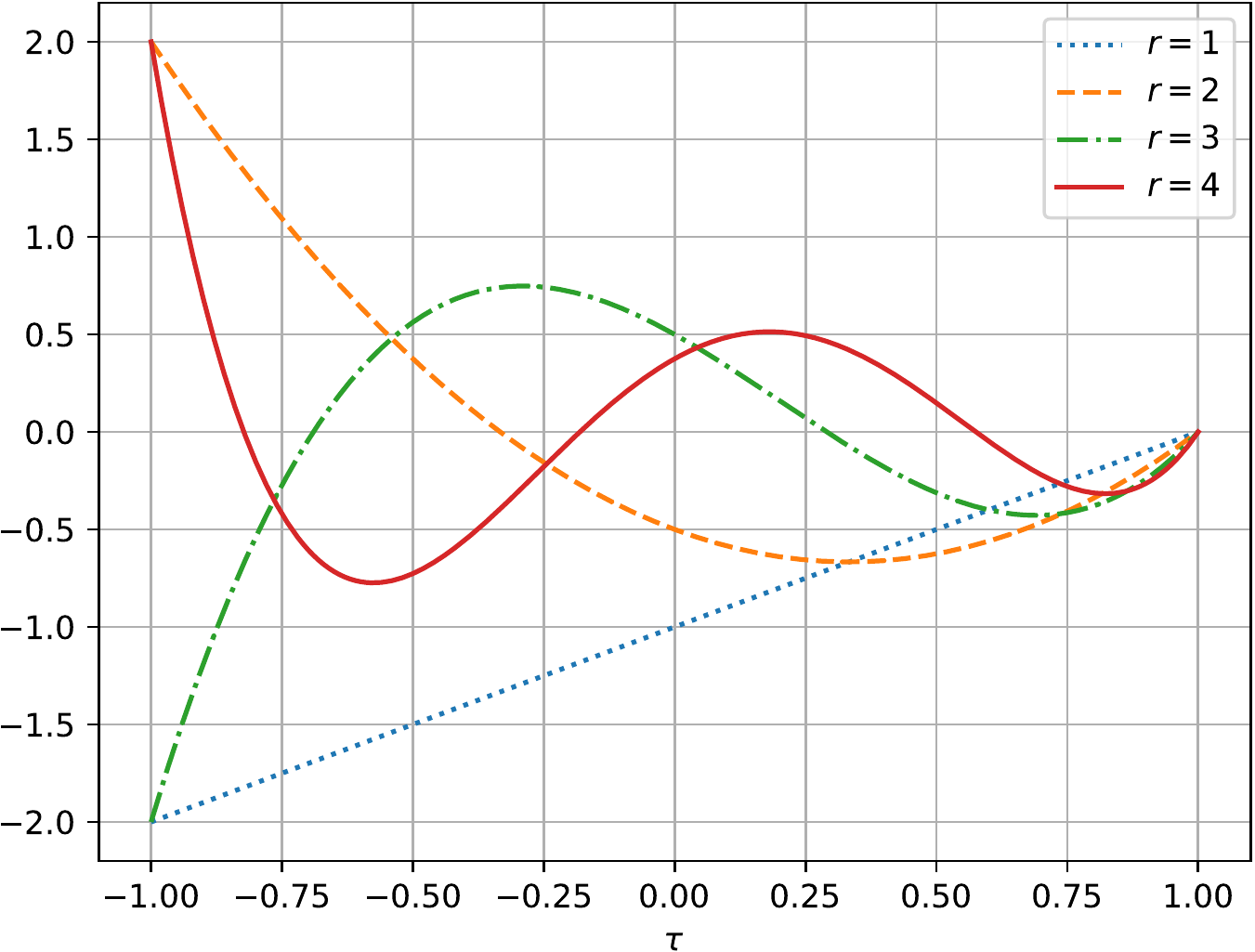}
\caption{The polynomials $P_r(\tau)-P_{r-1}(\tau)$.}\label{fig: Radau polys}
\end{figure}

\section{Practical consequences}\label{sec: consequences}
Throughout this section, we will assume that
\begin{equation}\label{eq: phi assumption}
\|U^{n-1}_--u(t_{n-1})\|+
\|U-u+a_{nr}(u)(p_{nr}-p_{n,r-1})\|_{I_n}\le C\phi(t_n,u)k_n^{r+1},
\end{equation}
for~$2\le n\le N$, where the factor~$\phi(t,u)$ will depend on the
regularity of~$u$, which in turn depends on the regularity and compatibility of
the initial data~$u_0$ and the source term~$f$. \cref{fig: Radau polys} plots
the right-hand Gauss--Radau polynomials
\[
p_{nr}(t)-p_{n,r-1}(t)=P_r(\tau)-P_{r-1}(\tau)
\]
as functions of~$\tau\in[-1,1]$ for $r\in\{1,2,3,4\}$.  In general, there are
$r+1$ points
\[
-1=\tau_0<\tau_1<\cdots<\tau_r=1,
\]
such that $\tau_1$, $\tau_2$, \dots, $\tau_r$ are the $r$~zeros of
$P_r-P_{r-1}$, and hence are also the abscissas of the right-hand, 
$r$-point Gauss--Radau quadrature rule for the interval~$[-1,1]$.
Recalling our previous notation~\eqref{eq: beta_n}, let
$t_{n\ell}=\beta_n(\tau_\ell)$ so that
$t_{n-1}=t_{n0}<t_{n1}<\cdots<t_{nr}=t_n$ with
\[
p_{nr}(t_{n\ell})-p_{n,r-1}(t_{n\ell})=0\quad\text{for $1\le\ell\le r$.}
\]
Thus, whereas $U(t)-u(t)=O(k_n^r)$ for general~$t\in I_n$, the DG time stepping
scheme is superconvergent at the $r$ special points $t_{n1}$, $t_{n2}$,
\dots, $t_{nr}$ in the half-open interval~$(t_{n-1},t_n]$.  More precisely,
\[
\|U(t_{n\ell})-u(t_{n\ell})\|\le C\phi(t_n,u)k_n^{r+1}
    \quad\text{for $1\le\ell\le r$.}
\]

Since $p_{nj}(t_{n-1})=P_j(-1)=(-1)^j$, another consequence 
of~\eqref{eq: phi assumption} is that
\[
\|U^{n-1}_+-u(t_{n-1})+2(-1)^ra_{nr}(u)\|\le C\phi(t_n,u)k_n^{r+1},
\]
which, in combination with the estimate
$\|U^{n-1}_--u(t_{n-1})\|\le C\phi(t_n,u)k_n^{r+1}$, shows that the
jump~$\jump{U}^{n-1}=U^{n-1}_+-U^{n-1}_-$ in the DG solution at~$t_{n-1}$
satisfies
\begin{equation}\label{eq: jump anr}
\bigl\|\jump{U}^{n-1}+2(-1)^r a_{nr}(u)\bigr\|\le C\phi(t_n,u)k_n^{r+1}.
\end{equation}
We are therefore able to show, in the following lemma, that
$\|\jump{U}^{n-1}\|$ is a low-cost and accurate error indicator for the DG
solution on~$I_n$.

\begin{lemma}\label{lem: jump error}
For $\phi$ as in~\eqref{eq: phi assumption} and $2\le n\le N$,
\[
\bigl\vert\|U-u\|_{I_n}-\|\jump{U}^{n-1}\|\bigr\vert
		\le C\phi(t_n,u)k_n^{r+1}.
\]
Thus,
\[
\|U-u\|_{I_n}=2\|a_{nr}(u)\|+O(k_n^{r+1})
    =\bigl\|\jump{U}^{n-1}\bigr\|+O(k_n^{r+1}).
\]
\end{lemma}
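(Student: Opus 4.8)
The plan is to reduce the whole statement to one scalar computation together with two applications of the reverse triangle inequality. The pivotal fact is that the right-hand Gauss--Radau polynomial has supremum norm exactly~$2$ on the reference interval. Since $|P_j(\tau)|\le1$ for $\tau\in[-1,1]$, the triangle inequality gives $|P_r(\tau)-P_{r-1}(\tau)|\le2$, while the endpoint values $P_j(-1)=(-1)^j$ give $P_r(-1)-P_{r-1}(-1)=2(-1)^r$, so this bound is attained at $\tau=-1$. Transferring to~$I_n$ through~$\beta_n$ shows $\|p_{nr}-p_{n,r-1}\|_{I_n}=2$, and because $a_{nr}(u)$ is a fixed element of~$\mathbb{L}$ multiplying a scalar function, it follows that $\|a_{nr}(u)(p_{nr}-p_{n,r-1})\|_{I_n}=2\|a_{nr}(u)\|$.

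With this in hand, I would first handle $\|U-u\|_{I_n}$. By assumption~\eqref{eq: phi assumption} the residual $E=U-u+a_{nr}(u)(p_{nr}-p_{n,r-1})$ satisfies $\|E\|_{I_n}\le C\phi(t_n,u)k_n^{r+1}$, so writing $U-u=-a_{nr}(u)(p_{nr}-p_{n,r-1})+E$ and applying the reverse triangle inequality in the sup-norm~$\|\cdot\|_{I_n}$ gives
\[
\bigl|\,\|U-u\|_{I_n}-2\|a_{nr}(u)\|\,\bigr|\le\|E\|_{I_n}\le C\phi(t_n,u)k_n^{r+1},
\]
which is the first equality in the \emph{Thus} statement.

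Next I would treat the jump. The estimate~\eqref{eq: jump anr} already asserts $\|\jump{U}^{n-1}+2(-1)^ra_{nr}(u)\|\le C\phi(t_n,u)k_n^{r+1}$, so a second application of the reverse triangle inequality, now in~$\mathbb{L}$ and using $\|2(-1)^ra_{nr}(u)\|=2\|a_{nr}(u)\|$, yields $\bigl|\,\|\jump{U}^{n-1}\|-2\|a_{nr}(u)\|\,\bigr|\le C\phi(t_n,u)k_n^{r+1}$, the second equality. Subtracting the two displays and using $\bigl|\,\|U-u\|_{I_n}-\|\jump{U}^{n-1}\|\,\bigr|\le\bigl|\,\|U-u\|_{I_n}-2\|a_{nr}(u)\|\,\bigr|+\bigl|\,2\|a_{nr}(u)\|-\|\jump{U}^{n-1}\|\,\bigr|$ then gives the stated inequality.

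The argument is short and I anticipate no serious obstacle. The only genuine content is the sup-norm identity $\|p_{nr}-p_{n,r-1}\|_{I_n}=2$; the slight subtlety is recognizing that the extremum of $|P_r-P_{r-1}|$ is attained at the \emph{left} endpoint rather than in the interior, which is precisely what makes the endpoint quantity $\jump{U}^{n-1}$ a sharp surrogate for the interval maximum. Everything else is routine bookkeeping with the reverse triangle inequality.
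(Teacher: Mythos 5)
Your proposal is correct and follows essentially the same route as the paper: both rest on the identity $\|a_{nr}(u)(p_{nr}-p_{n,r-1})\|_{I_n}=2\|a_{nr}(u)\|$ (with the maximum of $|P_r-P_{r-1}|$ attained at $\tau=-1$), then compare $\|U-u\|_{I_n}$ and $\|\jump{U}^{n-1}\|$ to $2\|a_{nr}(u)\|$ via \eqref{eq: phi assumption} and \eqref{eq: jump anr}. The paper's ``converse'' step is just your reverse triangle inequality unrolled by evaluating at $t_{n-1}^+$, so the two arguments are the same in substance.
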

\begin{proof}
First note that since
\[
\max_{-1\le\tau\le1}\vert P_r(\tau)-P_{r-1}(\tau)\vert
=\vert P_r(-1)-P_{r-1}(-1)\vert=2,
\]
we have
\begin{equation}\label{eq: max Radau poly}
\|a_{nr}(u)(p_{nr}-p_{n,r-1})\|_{I_n}
    =\vert p_{nr}(t_{n-1})-p_{n,r-1}(t_{n-1})\vert\|a_{nr}(u)\|=2\|a_{nr}(u)\|.
\end{equation}
Hence, for~$t\in I_n$,
\begin{align*}
\|U(t)-u(t)\|&\le\|U(t)-u(t)+a_{nr}(u)[p_{nr}(t)-p_{n,r-1}(t)]\|\\
    &\qquad{}+\|a_{nr}[p_{nr}(t)-p_{n,r-1}(t)]\|
    \le C\phi(t_n,u)k_n^{r+1}+2\|a_{nr}(u)\|,
\end{align*}
and so $\|U-u\|_{I_n}\le2\|a_{nr}(u)\|+C\phi(t_n,u)k_n^{r+1}$.  Conversely,
\begin{align*}
2\|a_{nr}(u)\|&=\|a_{nr}(u)[p_{nr}(t_{n-1})-p_{n,r-1}(t_{n-1})]\|\\
    &\le\|U^{n-1}_+-u(t_{n-1})+a_{nr}(u)[p_{nr}(t_{n-1})-p_{n,r-1}(t_{n-1})]\|\\
    &\qquad{}+\|U^{n-1}_+-u(t_{n-1})\|\\
    &\le C\phi(t_n,u)k_n^{r+1}+\|U-u\|_{I_n},
\end{align*}
and therefore
\[
\bigl\vert\|U-u\|_{I_n}-2\|a_{nr}(u)\|\bigr\vert\le C\phi(t_n,u)k_n^{r+1}.
\]
Since, by~\eqref{eq: jump anr},
\begin{align*}
\bigl\vert\|\jump{U}^{n-1}\|-2\|a_{nr}(u)\|\bigr\vert
    &=\bigl\vert\|\jump{U}^{n-1}\|-\|2(-1)^{r+1}a_{nr}(u)\|\bigr\vert\\
    &\le\bigl\|\jump{U}+2(-1)^ra_{nr}(u)\bigr\|\le C\phi(t_n,u)k_n^{r+1},
\end{align*}
the result follows.
\end{proof}

A unique continuous function~$U_*\in\mathcal{X}_{r+1}$ satisfies the
$r+1$~interpolation conditions
\[
U_*(t_{n\ell})=\begin{cases}
U^{n-1}_-&\text{if $\ell=0$,}\\
U(t_{n\ell})&\text{if $1\le\ell\le r-1$,}\\
U^n_-&\text{if $\ell=r$,}
\end{cases}
\]
for $1\le n\le N$, and we see that
\begin{equation}\label{eq: U* interp error}
(U_*-u)(t_{n\ell})=O(k_n^{r+1})\quad\text{for $0\le\ell\le r$.}
\end{equation}
Makridakis and Nochetto~\cite{MakridakisNochetto2006} introduced this
interpolant in connection with \emph{a posteriori} error analysis of diffusion
problems, and called $U_*$ the \emph{reconstruction} of~$U$.  The next theorem
provides a more explicit description of~$U_*$ that we then use to prove $U_*$
achieves the optimal convergence rate of order~$k_n^{r+1}$ over the whole
subinterval~$I_n$.

\begin{theorem}\label{thm: error estimator}
For $t\in I_n$ and $1\le n\le N$, the reconstruction~$U_*$ of the DG
solution~$U$ has the representation
\[
U_*(t)=U(t)-\frac{(-1)^r}{2}\jump{U}^{n-1}(p_{nr}-p_{n,r-1})(t)
    =\sum_{j=0}^r U_*^{nj}p_{nj}(t),
\]
where
\[
U_*^{nj}=\begin{cases}
    U^{nj}&\text{if $0\le j\le r-2$,}\\
    U^{n,r-1}+\tfrac12(-1)^r\jump{U}^{n-1}&\text{if $j=r-1$,}\\
    -\tfrac12(-1)^r\jump{U}^{n-1}&\text{if $j=r$.}
\end{cases}
\]
\end{theorem}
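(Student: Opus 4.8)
The plan is to verify directly that the closed-form candidate
\[
W(t):=U(t)-\frac{(-1)^r}{2}\jump{U}^{n-1}(p_{nr}-p_{n,r-1})(t)
\]
meets the $r+1$ interpolation conditions that define $U_*$ on $I_n$, and then to appeal to the uniqueness of the interpolant. First I would check the degree: since $U\vert_{I_n}\in\mathbb{P}_{r-1}(\mathbb{H})$ while $p_{nr}-p_{n,r-1}\in\mathbb{P}_r$, we have $W\vert_{I_n}\in\mathbb{P}_r(\mathbb{H})$, which is exactly the membership required for $\mathcal{X}_{r+1}$.

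Next I would dispatch the interpolation conditions one by one, exploiting the fact that the abscissas $t_{n1},\dots,t_{nr}$ are precisely the zeros of the Radau polynomial $p_{nr}-p_{n,r-1}$ on $I_n$. For $1\le\ell\le r-1$ the point $t_{n\ell}$ is an interior zero, so the correction term vanishes and $W(t_{n\ell})=U(t_{n\ell})$. The right endpoint $t_{nr}=t_n$ is also a zero, because $p_{nr}(t_n)=p_{n,r-1}(t_n)=1$; hence $W(t_n)=U^n_-$. The only case needing computation is the left endpoint $t_{n0}=t_{n-1}$, where $\tau=-1$ and therefore $(p_{nr}-p_{n,r-1})(t_{n-1})=P_r(-1)-P_{r-1}(-1)=(-1)^r-(-1)^{r-1}=2(-1)^r$. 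Substituting and using $\tfrac{(-1)^r}{2}\cdot 2(-1)^r=1$ gives $W(t_{n-1})=U^{n-1}_+-\jump{U}^{n-1}=U^{n-1}_-$ by the definition of the jump. All $r+1$ conditions hold, so $W$ coincides with the unique interpolating degree-$r$ polynomial $U_*$ on each $I_n$, which proves the first representation.

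Finally, the Legendre-coefficient form is obtained by simply expanding $W$. Writing $U=\sum_{j=0}^{r-1}U^{nj}p_{nj}$ and subtracting $\tfrac{(-1)^r}{2}\jump{U}^{n-1}(p_{nr}-p_{n,r-1})$ leaves the coefficients $U^{nj}$ untouched for $0\le j\le r-2$, adds $\tfrac12(-1)^r\jump{U}^{n-1}$ to the coefficient of $p_{n,r-1}$, and introduces a new top coefficient $-\tfrac12(-1)^r\jump{U}^{n-1}$ for $p_{nr}$; this is exactly the claimed list of $U_*^{nj}$. There is no genuine obstacle in the argument: it is a direct verification, and the only places demanding care are getting the sign $2(-1)^r$ of the Radau polynomial at $t_{n-1}$ correct and distinguishing the one-sided limit $U^{n-1}_+=U(t_{n-1}\vert_{I_n})$ from $U^{n-1}_-$ inside the jump.
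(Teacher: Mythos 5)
Your proposal is correct and follows essentially the same route as the paper: both arguments hinge on the fact that $U-U_*$ is a degree-$r$ polynomial determined by the $r$ Radau zeros up to a multiple of $p_{nr}-p_{n,r-1}$, with the multiple fixed by evaluating at $t_{n-1}$ where the Radau polynomial equals $2(-1)^r$. The only cosmetic differences are the direction of the argument (you verify the closed-form candidate against the interpolation conditions and invoke uniqueness, whereas the paper derives the form of $U-U_*$ first) and that you read off the Legendre coefficients by inspection rather than via the orthogonality relation; both are sound.
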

\begin{proof}
Since the polynomial~$(U-U_*)\vert_{I_n}\in\mathbb{P}_r(\mathbb{H})$ vanishes
at~$t_{n\ell}$ for $1\le\ell\le r$, there must be a constant~$\gamma$ such that
$U(t)-U_*(t)=\gamma(p_{nr}-p_{n,r-1})(t)$ for~$t\in I_n$.  Taking the limit
as~$t\to t_{n-1}^+$, we have $U^{n-1}_+-U^{n-1}_-=\gamma[(-1)^r-(-1)^{r-1}]
=2(-1)^r\gamma$ and so $\gamma=(-1)^r\jump{U}^{n-1}/2$. It follows
from~\eqref{eq: pnj properties} that
\[
a_{nj}(U-U_*)=\frac{2j+1}{k_n}\int_{I_n}(U-U_*)(t)p_{nj}(t)\,\ud t
    =\frac{(-1)^r}{2}\jump{U}^{n-1}(\delta_{jr}-\delta_{j,r-1}),
\]
implying the formulae for $U^{nj}_*=a_{nj}(U_*)$.
\end{proof}
\begin{corollary}\label{cor: U* error}
$\|U_*-u\|_{I_n}\le C\phi(t_n,u)k_n^{r+1}$ for $2\le n\le N$.
\end{corollary}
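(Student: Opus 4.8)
The plan is to start from the explicit representation
\[
U_*=U-\frac{(-1)^r}{2}\jump{U}^{n-1}(p_{nr}-p_{n,r-1})
\]
furnished by \cref{thm: error estimator}, and to recognize the post-processing correction $-\tfrac12(-1)^r\jump{U}^{n-1}$ as an accurate surrogate for the missing top Legendre coefficient $a_{nr}(u)$. Subtracting $u$ and inserting $\pm a_{nr}(u)(p_{nr}-p_{n,r-1})$, I would split
\[
U_*-u=\bigl[U-u+a_{nr}(u)(p_{nr}-p_{n,r-1})\bigr]
-\Bigl[a_{nr}(u)+\tfrac12(-1)^r\jump{U}^{n-1}\Bigr](p_{nr}-p_{n,r-1}),
\]
so that the whole claim reduces to estimating these two pieces separately in the $I_n$-norm.

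For the first bracket I would simply invoke the standing assumption~\eqref{eq: phi assumption}, which bounds its $I_n$-norm by $C\phi(t_n,u)k_n^{r+1}$. For the coefficient in the second bracket, the factorization
\[
a_{nr}(u)+\tfrac12(-1)^r\jump{U}^{n-1}
=\tfrac12(-1)^r\bigl[\jump{U}^{n-1}+2(-1)^r a_{nr}(u)\bigr]
\]
shows that its norm equals exactly half of $\bigl\|\jump{U}^{n-1}+2(-1)^r a_{nr}(u)\bigr\|$, which by~\eqref{eq: jump anr} is at most $C\phi(t_n,u)k_n^{r+1}$. Multiplying by $p_{nr}-p_{n,r-1}$ and using the bound $\|p_{nr}-p_{n,r-1}\|_{I_n}=2$ recorded in the proof of \cref{lem: jump error} (see~\eqref{eq: max Radau poly}), the second term is likewise $O(\phi(t_n,u)k_n^{r+1})$, and the triangle inequality then yields the corollary for $2\le n\le N$.

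There is essentially no hard analytic obstacle here: the estimate is a direct algebraic combination of~\eqref{eq: phi assumption} and~\eqref{eq: jump anr}, both already available on the range $2\le n\le N$. The only point demanding care is the bookkeeping of the signs and the factor of two, where one must check that the reconstruction's correction term matches $a_{nr}(u)$ closely enough that the two leading-order errors \emph{cancel} rather than add. This is precisely the content of the factorization above, which exhibits $-\tfrac12(-1)^r\jump{U}^{n-1}$ as agreeing with $a_{nr}(u)$ to within $O(\phi(t_n,u)k_n^{r+1})$---the conceptual reason the post-processing recovers the full order $k_n^{r+1}$ globally on $I_n$ rather than only at the right Radau nodes.
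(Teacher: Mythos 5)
Your proposal is correct and follows essentially the same route as the paper: both start from the representation of $U_*$ in \cref{thm: error estimator}, insert $\pm a_{nr}(u)(p_{nr}-p_{n,r-1})$, and bound the two resulting terms via \eqref{eq: phi assumption} and \eqref{eq: jump anr} together with \eqref{eq: max Radau poly}. Your sign and factor-of-two bookkeeping is accurate, so nothing further is needed.
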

\begin{proof}
We see from the \cref{thm: error estimator} and \eqref{eq: max Radau poly}
that
\begin{align*}
\|U_*-u\|_{I_n}&=\|U-u-\tfrac12(-1)^r\jump{U}^{n-1}(p_{nr}-p_{n,r-1})\|_{I_n}\\
    &\le\|U-u+a_{nr}(u)(p_{nr}-p_{n,r-1})\|_{I_n}
    +\tfrac12\|\jump{U}^{n-1}+2(-1)^ra_{nr}(u)\|,
\end{align*}
so it suffices to apply \eqref{eq: phi assumption}~and \eqref{eq: jump anr}.
\end{proof}

\begin{example}\label{example: weighted error f=0}
Let $f\equiv0$ and let $u_0$ belong to the domain of~$A^s$.
By~\eqref{eq: U error s},
\[
t^{r-s}\|U(t)-u(t)\|\le Ck^r\|A^su_0\|
\quad\text{if $0<t\le T$ and $0\le s\le r$,}
\]
and by \eqref{eq: nodal error f=0},
\[
t_n^{2r-1-s}\|U^n_--u(t_n)\|\le C
    k^{2r-1}\|A^su_0\|\quad\text{if $1\le n\le N$ and $0\le s\le 2r-1$.}
\]
Furthermore, \cref{cor: main result} shows that our assumption~\eqref{eq: phi
assumption} is satisfied with
\[
\phi(t,u)=t^{s-(r+1)}\|A^su_0\|
\]
so
\[
t_n^{r+1-s}\|U_*-u\|_{I_n}\le Ck^{r+1}\|A^su_0\|
\quad\text{if $2\le n\le N$ and $0\le s\le r+1$.}
\]
\end{example}
%%%%%%%%%%%%%%%%%%%%%%%%%%%%%%%%%%%%%%%%%%%%%%%%%%%%%%%%%%%%%%%%%%%%%%%%%%%%%%%
\section{Numerical experiments}\label{sec: experiments}
The computational experiments described in this section were performed in
standard 64-bit floating point arithmetic using Julia v1.7.2 on a desktop
computer having a Ryzen~7 3700X processor and $32\,\mathrm{GiB}$ of RAM. The
source code is available online~\cite{McLean2022}.  In all cases, we use
uniform time steps $k_n=k=T/N$.

\subsection{A simple ODE.}\label{sec: ODE}
We begin with the ODE initial-value problem
\[
u'+\lambda u=f(t)\quad\text{for $0\le t\le 2$, with $u(0)=1$,}
\]
where in place of a linear operator~$A$ we have just the scalar~$\lambda=1/2$,
and where $f(t)=\cos(\pi t)$.  For the piecewise-cubic case with~$N=5$
subintervals, \cref{fig: ODE} shows that $U-U_*$ provides an excellent
approximation to the error~$U-u$, and that the error profile is approximately
proportional to $p_{nr}-p_{n,r-1}$ with~$r=4$; cf.~\eqref{eq:
super-approx}~and \cref{fig: Radau polys}. In particular, superconvergence at
the Radau points is apparent.  By sampling at 50~points in each subinterval, we
estimated the maximum errors
\[
\adjustlimits\max_{1\le n\le N}\sup_{t\in I_n}\vert U(t)-u(t)\vert
\quad\text{and}\quad
\adjustlimits\max_{1\le n\le N}\sup_{t\in I_n}\vert U_*(t)-u(t)\vert,
\]
and, as expected from \eqref{eq: U optimal}~and \cref{cor: U* error}, the
values shown in \cref{table: ODE} exhibit convergence rates
$r=4$~and $r+1=5$, respectively.  The table also shows a convergence
rate~$2r-1=7$ for the nodal error
$\max_{1\le n\le N}\vert U^n_--u(t_n)\vert$ up
to the row where this error approaches the unit roundoff.  By using Julia's
BigFloat datatype, we were able to observe $O(k^7)$ convergence of~$U^n_-$
up to~$N=128$, for which value the nodal error was \texttt{1.56e-19}.

\begin{figure}
\begin{center}
\includegraphics[scale=0.6]{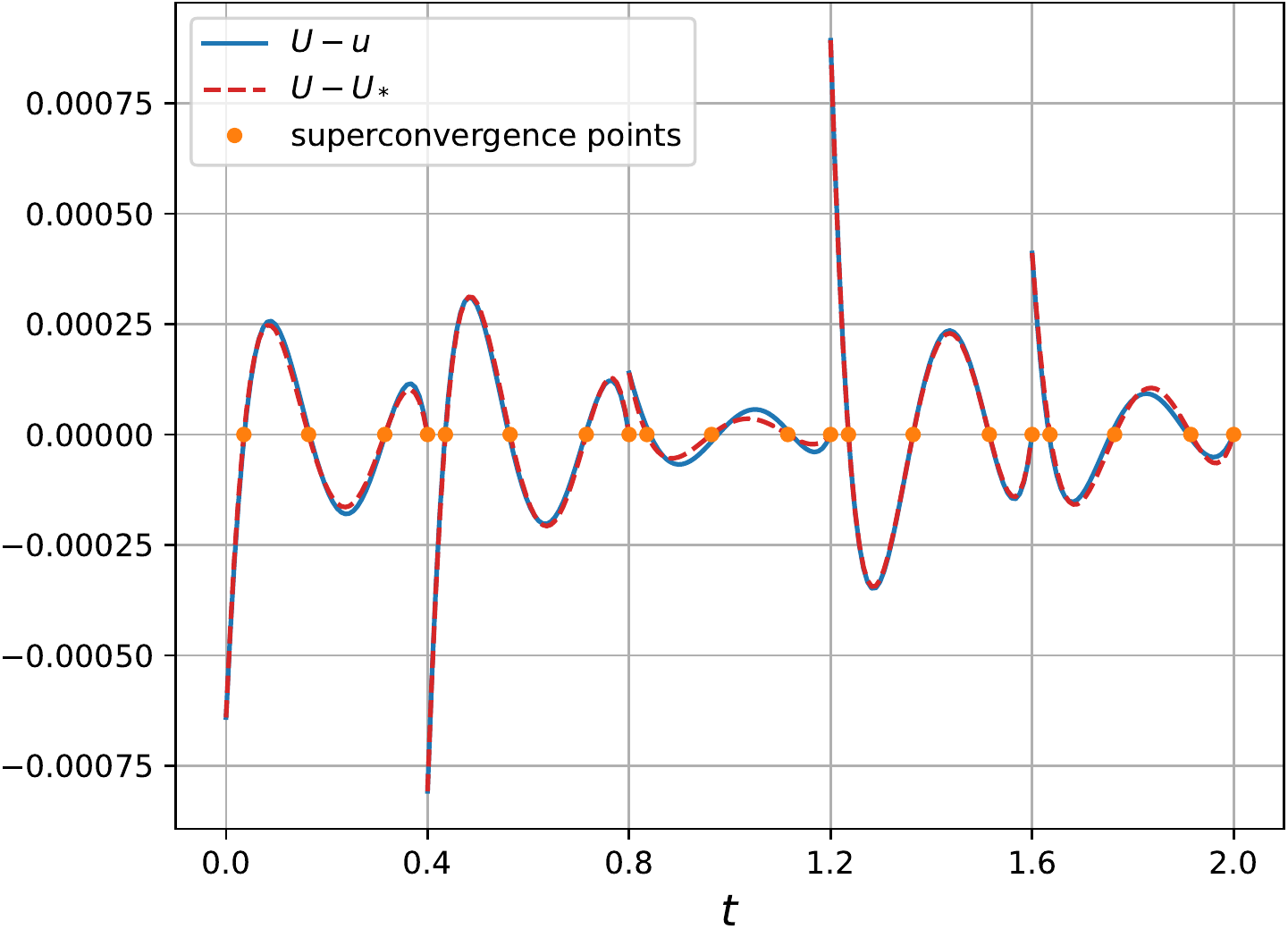}
\end{center}
\caption{The DG error $U-u$, the difference $U-U_*$ between the DG solution and
its reconstruction, along with the superconvergence points~$t_{nj}$
($1\le j\le r$), for the ODE of \cref{sec: ODE} using piecewise-cubics
($r=4$).}\label{fig: ODE}
\end{figure}

\begin{table}
\caption{Errors and convergence rates for the ODE of \cref{sec: ODE} using
piecewise-cubics ($r=4$).}\label{table: ODE}
\renewcommand{\arraystretch}{1.2}
\begin{center}
\ttfamily
\begin{tabular}{r|rr|rr|rr}
$N$&\multicolumn{2}{c|}{\textrm{Error in $U$}}&
\multicolumn{2}{c|}{\textrm{Error in $U_*$}}&
\multicolumn{2}{c}{\textrm{Error in $U^n_-$}}\\
\hline
   4 &  1.75e-03 &         &  6.15e-05 &         &  5.26e-09\\
   8 &  1.36e-04 &   3.684 &  2.26e-06 &   4.769 &  4.08e-11 &   7.010\\
  16 &  8.85e-06 &   3.945 &  7.19e-08 &   4.973 &  3.27e-13 &   6.962\\
  32 &  5.55e-07 &   3.996 &  2.26e-09 &   4.994 &  2.66e-15 &   6.941\\
  64 &  3.48e-08 &   3.995 &  7.05e-11 &   5.000 &  7.77e-16 &   1.778\\
 128 &  2.17e-09 &   3.999 &  2.20e-12 &   4.999 &  1.55e-15 &  -1.000\\
\hline
\textrm{Theory}&          &      $4$&           &      $5$&           & $7$
\end{tabular}
\end{center}
\end{table}

\subsection{A parabolic PDE in 1D}\label{sec: PDE 1D}
Now consider the 1D heat equation with constant thermal conductivity~$\kappa>0$,
\begin{equation}\label{eq: heat 1D}
u_t-\kappa u_{xx}=f(x,t)\quad\text{for $0<t\le T$ and $0\le x\le L$,}
\end{equation}
subject to the boundary conditions $u(0,t)=0=u(L,t)$ for $0\le t\le T$, and to
the initial condition $u(x,0)=u_0(x)$ for $0\le x\le L$.  To obtain a
reference solution, we introduce the Laplace transform
$\hat u(x,z)=\int_0^\infty e^{-zt}u(x,t)\,\ud t$, which satisfies the two-point
boundary-value problem (with complex parameter~$z$),
\[
-\hat u_{xx}+\omega^2\hat u=g(x,z)\quad\text{for $0\le x\le L$,}
\quad\text{with $\hat u(0,z)=0=\hat u(L,z)$,}
\]
where $\omega=(z/\kappa)^{1/2}$ and $g(x,z)=\kappa^{-1}[u_0(x)+\hat f(x,z)]$.
Consequently, the variation-of-constants formula yields the
representation~\cite[Section~7.3]{McLean2020}
\begin{multline*}
\hat u(x,z)=\frac{\sinh\omega(L-x)}{\omega\sinh\omega L}\int_0^x
    g(\xi,z)\sinh\omega\xi\,\ud\xi\\
    +\frac{\sinh\omega x}{\omega\sinh\omega L}\int_x^L
    g(\xi,z)\sinh\omega(L-\xi)\,\ud\xi,
\end{multline*}
and we then invert the Laplace transform by numerical evaluation of the
Bromwich integral~\cite{WeidemanTrefethen2007},
\[
u(x,t)=\frac{1}{2\pi i}\int_{\mathcal{C}}e^{zt}\hat u(x,z)\,dz,
\]
for a hyperbolic contour~$\mathcal{C}$ homotopic to the imaginary axis
and passing to the right of all singularities of~$\hat u(x,z)$.

\begin{figure}
\begin{center}
\includegraphics[scale=0.6]{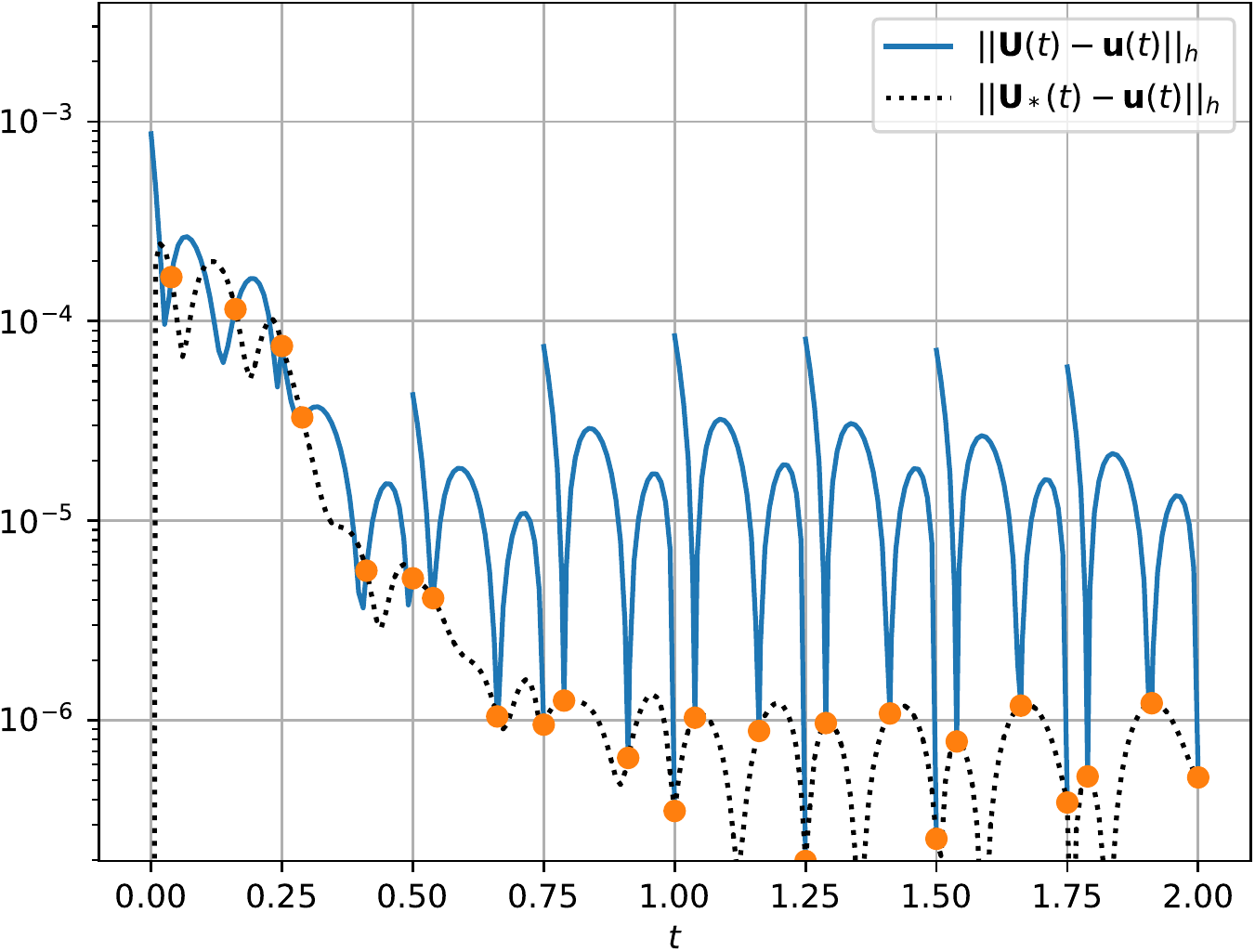}
\end{center}
\caption{Time dependence of the errors in the DG solution~$U(t)$ and its
reconstruction~$U_*(t)$ for the 1D heat equation~\eqref{eq: heat 1D}, using
piecewise-quadratics ($r=3$) over $N=8$ time intervals.}
\label{fig: errors 1D}
\end{figure}

To discretise in space, we introduce a finite difference grid
\[
x_p=p\,h\quad\text{for $0\le p\le P$,}\quad\text{where $h=L/P$,}
\]
and define $u_p(t)\approx u(x_p,t)$ via the method of lines, replacing
$u_{xx}$ with a second-order central difference approximation to arrive at
the system of ODEs
\begin{equation}\label{eq: MOL 1D}
u_p'(t)-\kappa\frac{u_{p+1}(t)-2u_p(t)+u_{p-1}(t)}{h^2}=f_p(t)
\quad\text{for $1\le p\le P-1$,}
\end{equation}
where $f_p(t)=f(x_p,t)$ with the boundary conditions
$u_0(t)=0=u_P(t)$ and the initial condition~$u_p(0)=u_0(x_p)$.  For our
test problem, we choose
\begin{equation}\label{eq: PDE1D data}
L=2,\quad T=2,\quad\kappa=(L/\pi)^2,\quad u_0(x)=x(L-x),\quad
f(x,t)=(1+t)e^{-t},
\end{equation}
where the value of the thermal conductivity~$\kappa$ normalises the time
scale by making the smallest eigenvalue of~$A=-\kappa(d/dx)^2$ equal~$1$.  We
will see below that $u_0\in D(A^s)$ iff $s<5/4$, so the regularity of the
solution~$u$ is limited.

We apply DG to discretise $u_p(t)$ in time and denote the resulting
fully-discrete solution by~$\bs{U}(t)=[U_p(t)]\approx\bs{u}(t)=[u_p(t)]$.
\cref{fig: errors 1D} plots the error in $\bs{U}$ and in its
reconstruction~$\bs{U}_*$ using piecewise-quadratics ($r=3$) and $N=8$~equal
subintervals in time, with $P=500$ for the spatial grid. The errors are
measured in the discrete $L_2$-norm, that is,
\[
\|\bs{U}(t)-\bs{u}(t)\|_h^2=\sum_{p=0}^P\vert U_p(t)-u(x_p,t)\vert^2\,h,
\]
and we observe a clear deterioration in accuracy as~$t$ approaches zero.

To speed up the convergence as~$h\to0$, we compute also a second DG
solution~$U^\fine_p(t)$ using a finer spatial grid with~$P^\fine=2P$
subintervals, and then perform one step of Richardson extrapolation (on the
coarser grid), defining
\[
\UR_p(t)=U^\fine_{2p}(t)+\tfrac13[U^\fine_{2p}(t)-U_p(t)]
\quad\text{for $0\le p\le P$.}
\]
\cref{table: PDE1D cutoff} shows errors in this spatially extrapolated DG
solution over the time interval~$[T/4,T]$, that is,
\begin{equation}\label{eq: cutoff error}
\max_{T/4\le t\le T}\|\bUR(t)-\bs{u}(t)\|_h,
\end{equation}
as well as the corresponding errors in the
reconstruction~$\bUR_*(t)$ and the nodal
values~$(\bUR)^n_-$. Again, the observed convergence rates are
as expected.

\begin{table}
\caption{Maximum errors over the time interval~$[T/4,T]$ for the 1D heat
equation of \cref{sec: PDE 1D} using piecewise-quadratics ($r=3$).}
\label{table: PDE1D cutoff}
\renewcommand{\arraystretch}{1.2}
\begin{center}
\ttfamily
\begin{tabular}{rr|rr|rr|rr}
$N$&$P$&\multicolumn{2}{c|}{\textrm{Error in $\bUR$}}&
\multicolumn{2}{c|}{\textrm{Error in $\bUR_*$}}&
\multicolumn{2}{c}{\textrm{Error in $(\bUR)^n_-$}}\\
\hline
   8 &   500 & 8.46e-05&        & 7.51e-05&        & 5.17e-06\\
  16 &   500 & 1.07e-05&  2.978 & 5.37e-07&  7.129 & 1.15e-07&  5.490\\
  32 &   500 & 1.35e-06&  2.989 & 2.30e-08&  4.544 & 3.99e-09&  4.847\\
  64 &   500 & 1.69e-07&  2.995 & 1.21e-09&  4.244 & 1.47e-10&  4.762\\
 128 &   500 & 2.12e-08&  2.997 & 6.98e-11&  4.121 & 5.80e-12&  4.664\\
\hline
\multicolumn{2}{c|}{\textrm{Theory}}
             &         &     $3$&         &     $4$&         & $5$
\end{tabular}
\end{center}
\end{table}

\begin{table}
\caption{Weighted errors for the 1D heat equation of
\cref{sec: PDE 1D} using piecewise-quadratics ($r=3$) and the indicated
exponent~$\alpha$ in the weight function~$w_\alpha(t)$.  The top set of results
is for the homogeneous equation ($f\equiv0$).  The bottom set is for the
general case (both $u_0$~and $f$ non-zero).}
\label{table: PDE1D weighted}
\renewcommand{\arraystretch}{1.2}
\begin{center}
\ttfamily
\begin{tabular}{rr|rr|rr|rr}
$N$&$P$&\multicolumn{2}{c|}{\textrm{Error in $\bUR$}}&
\multicolumn{2}{c|}{\textrm{Error in $\bUR_*$}}&
\multicolumn{2}{c}{\textrm{Error in $(\bUR)^n_-$}}\\
\hline
&&\multicolumn{2}{c|}{$\alpha=r-\tfrac54$}&
\multicolumn{2}{c|}{$\alpha=r+1-\tfrac54$}&
\multicolumn{2}{c}{$\alpha=2r-1-\tfrac54$}\\
\hline
   8 &   500 & 8.52e-05&        & 7.08e-06&        & 1.77e-06\\
  16 &   500 & 1.15e-05&  2.893 & 4.42e-07&  4.001 & 5.53e-08&  5.001\\
  32 &   500 & 1.49e-06&  2.946 & 2.76e-08&  4.000 & 1.73e-09&  5.000\\
  64 &   500 & 1.90e-07&  2.973 & 1.73e-09&  4.000 & 5.40e-11&  5.000\\
 128 &   500 & 2.39e-08&  2.987 & 1.08e-10&  4.000 & 1.69e-12&  5.000\\
\hline
\multicolumn{2}{c|}{\textrm{Theory}}& &\multicolumn{1}{l|}{$3$}&
&\multicolumn{1}{l|}{$4$}&   &\multicolumn{1}{l}{$5$}
\end{tabular}
\end{center}
\strut
\begin{center}
\ttfamily
\begin{tabular}{rr|rr|rr|rr}
$N$&$P$&\multicolumn{2}{c|}{\textrm{Error in $\UR$}}&
\multicolumn{2}{c|}{\textrm{Error in $\bUR_*$}}&
\multicolumn{2}{c}{\textrm{Error in $(\bUR)^n_-$}}\\
\hline
&&\multicolumn{2}{c|}{$\alpha=r-\tfrac54$}&
\multicolumn{2}{c|}{$\alpha=r+1-\tfrac54$}&
\multicolumn{2}{c}{$\alpha=2r-1-\tfrac54$}\\
\hline
   8 &   500 & 8.46e-05&        & 1.66e-06&        & 4.15e-07\\
  16 &   500 & 1.07e-05&  2.978 & 1.03e-07&  4.007 & 1.70e-08&  4.606\\
  32 &   500 & 1.35e-06&  2.989 & 6.46e-09&  3.999 & 7.89e-10&  4.433\\
  64 &   500 & 1.69e-07&  2.995 & 4.04e-10&  4.000 & 3.84e-11&  4.362\\
 128 &   500 & 2.12e-08&  2.997 & 2.52e-11&  4.000 & 1.93e-12&  4.311\\
\hline
\multicolumn{2}{c|}{\textrm{Theory}}& &\multicolumn{1}{l|}{$3$}&
&\multicolumn{1}{l|}{$4$}&   &\multicolumn{1}{l}{$5$}
\end{tabular}
\end{center}
\end{table}
To investigate the time dependence of the error for~$t$ near zero, we consider
the weighted error in the DG solution
\[
\adjustlimits\max_{1\le n\le N}\sup_{t\in I_n}
    w_\alpha(t)\|\bUR(t)-\bs{u}(t)\|_h
\quad\text{where}\quad w_\alpha(t)=\min(t^\alpha,1),
\]
and likewise incorporate the weight~$w_\alpha(t)$ when measuring
the reconstruction error and the nodal error.  The top part of
\cref{table: PDE1D weighted} shows results for the homogeneous problem, that is,
with the same data as in~\eqref{eq: PDE1D data} except $f(x,t)\equiv0$.  The
$m$th Fourier sine coefficient of~$u_0$ is proportional to~$m^{-3}$, so
$\|A^su_0\|\le C\epsilon^{-1/2}$ for $s=\tfrac54-\epsilon$ and  $\epsilon>0$.
Based on the estimates in \cref{example: weighted error f=0}, we choose the
weight exponents $\alpha=r-\tfrac54$ for the DG error, $r+1-\tfrac54$ for the
reconstruction error, and $2r-1-\tfrac54$ for the nodal error, and observe
excellent agreement in the top set of results in \cref{table: PDE1D weighted}
with the expected convergence rates of order $r$, $r+1$ and $2r-1$,
respectively.

Similar results are found if $u_0(x)\equiv0$ with nonzero~$f$.
Curiously, in the bottom part of~\cref{table: PDE1D weighted}, choosing both
$u_0$~and $f$ as in~\eqref{eq: PDE1D data} (so both nonzero) disturbs the
observed convergence rates for~$(\bUR)^n_-$, although not for
$\bUR$~or $\bUR_*$.

\subsection{A parabolic PDE in 2D}\label{sec: PDE 2D}

\begin{table}
\caption{Maximum errors over the time interval~$[T/4,T]$ for the spatially
discrete, 2D heat equation of \cref{sec: PDE 2D} using piecewise-quadratics
($r=3$).}
\label{table: PDE2D cutoff}
\renewcommand{\arraystretch}{1.2}
\begin{center}
\ttfamily
\begin{tabular}{rrr|rr|rr|rr}
$N$&$P_x$&$P_y$&\multicolumn{2}{c|}{\textrm{Error in $\bs{U}_h$}}&
\multicolumn{2}{c|}{\textrm{Error in $(\bs{U}_h)_*$}}&
\multicolumn{2}{c}{\textrm{Error in $(\bs{U}_h)^n_-$}}\\
\hline
   8 &    50 &    50 & 5.32e-04&        & 4.70e-04&        & 2.60e-05&       \\
  16 &    50 &    50 & 4.60e-05&  3.533 & 1.48e-06&  8.316 & 4.40e-07&  5.888\\
  32 &    50 &    50 & 5.15e-06&  3.160 & 6.80e-08&  4.440 & 1.43e-08&  4.940\\
  64 &    50 &    50 & 6.10e-07&  3.078 & 4.16e-09&  4.029 & 4.65e-10&  4.944\\
 128 &    50 &    50 & 7.42e-08&  3.038 & 2.58e-10&  4.010 & 1.49e-11&  4.967\\
\hline
\multicolumn{3}{c|}{\textrm{Theory}}
             &         & \multicolumn{1}{l|}{$3$}
             &         & \multicolumn{1}{l|}{$4$}
             &         & \multicolumn{1}{l}{$5$}
\end{tabular}
\end{center}
\end{table}

Now consider the 2D heat equation,
\begin{equation}
u_t-\kappa\nabla^2 u=f(x,y,t)\quad
    \text{for $0<t\le T$ and $(x,y)\in\Omega=(0,L_x)\times(0,L_y)$,}
\end{equation}
subject to the boundary conditions $u(x,y,t)=0$ for $(x,y)\in\partial\Omega$,
and to the initial condition $u(x,y,0)=u_0(x,y)$ for $(x,y)\in\Omega$.
We introduce a spatial finite difference grid
\[
(x_p,y_q)=(p\,h_x,q\,h_y)\quad
    \text{for $0\le p\le P_x$ and $0\le q\le P_y$,}
\]
with $h_x=L_x/P_x$~and $h_y=L_y/P_y$.  The semidiscrete finite difference
solution~$u_{pq}(t)\approx u(x_p,y_q,t)$ is then constructed using the standard
5-point approximation to the Laplacian, so that
\begin{equation}\label{eq: finite diff}
u_{pq}'-\kappa\biggl(\frac{u_{p+1,q}-2u_{pq}+u_{p-1,q}}{h_x^2}
    +\frac{u_{p,q+1}-2u_{pq}+u_{p,q-1}}{h_y^2}\biggr)=f_{pq}
\end{equation}
for $0\le t\le T$ and $(x_p,y_q)\in\Omega$, where $f_{pq}(t)=f(x_p,y_q,t)$,
together with the boundary condition $u_{pq}(t)=0$
for~$(x_p,y_q)\in\partial\Omega$, and the initial
condition~$u_{pq}(0)=u_0(x_p,y_q)$ for~$(x_p,y_q)\in\Omega$.
For $(x_p,y_q)\in\Omega$, we use column-major ordering to arrange the unknowns
$u_{pq}(t)$, the source terms $f_{pq}(t)$ and initial data $u_{0pq}$ into
vectors~$\bs{u}_h(t)$, $\bs{f}(t)$~and $\bs{u}_0\in\mathbb{R}^M$
for~$M=(P_x-1)(P_y-1)$.   There is then a sparse matrix~$\bs{A}$ such that the
system of ODEs~\eqref{eq: finite diff} leads to the initial-value problem
\begin{equation}\label{eq: semidiscrete}
\bs{u}_h'(t)+\bs{A}\bs{u}_h=\bs{f}(t)\quad\text{for $0\le t\le T$, with
$\bs{u}_h(0)=\bs{u}_0$.}
\end{equation}

For our test problem, we take $L_x=L_y=2$~and $P_x=P_y=50$ with
\begin{equation}\label{eq: PDE2D data}
T=2,\quad\kappa=2/\pi^2,\quad u_0(x,y)=x(2-x)y(2-y),\quad
f(x,y,t)=(1+t)e^{-t},
\end{equation}
where the choice of~$\kappa$ ensures that the smallest Dirichlet eigenvalue of
$-\kappa\nabla^2$ on~$\Omega$ equals~$1$.  \cref{table: PDE2D cutoff}
compares the piecewise-quadratic ($r=3$) DG solution~$\bs{U}_h(t)$ of the
semidiscrete problem~\eqref{eq: semidiscrete} with~$\bs{u}_h(t)$, evaluating the
latter using numerical inversion of the Laplace transform as before except that
now, instead of~$\hat u(z)$, we work with the spatially discrete
approximation~$\hat{\bs{u}}_h(z)$ obtained by solving the (complex) linear
system $(z\bs{I}+\bs{A})\hat{\bs{u}}_h(z)=u_0+\hat{\bs{f}}(z)$.  As with the 1D
results in \cref{table: PDE1D cutoff}, we compute the maximum error over the
time interval~$[T/4,T]$, and observe the expected rates of convergence, keeping
in mind that by treating  $\bs{u}_h(t)$ as our reference solution we are
ignoring the error from the spatial discretization.
%%%%%%%%%%%%%%%%%%%%%%%%%%%%%%%%%%%%%%%%%%%%%%%%%%%%%%%%%%%%%%%%%%%%%%%%%%%%%%%
\section{Declarations}
\subsection{Ethical Approval and Consent to participate}
Not applicable.
\subsection{Consent for publication}
Not applicable.
\subsection{Human and Animal Ethics}
Not applicable.
\subsection{Availability of supporting data}
The paper does not make use of any data sets.  The software used to generate
the numerical results is available on github~\cite{McLean2022}.
\subsection{Competing interests}
The authors have no competing interests.
\subsection{Funding}
This work was not funded as part of any research grant.
\subsection{Authors' contributions}
William McLean wrote an initial outline of the paper, that subsequently
underwent multiple revisions arising from correspondence with Kassem Mustapha.
William McLean carried out the numerical computations reported in the paper.
\subsection{Acknowledgments}
None.
%%%%%%%%%%%%%%%%%%%%%%%%%%%%%%%%%%%%%%%%%%%%%%%%%%%%%%%%%%%%%%%%%%%%%%%%%%%%%%%
\bibliography{DG_superconv}
\end{document}